\newlength{\tabwidth}
\newlength{\tabheight}
\newlength{\tabrule}
\newlength{\tabwidthx}
\newlength{\tabheightx}
\def\gentabbox#1#2#3#4{\vbox to \tabheight{\setlength{\tabrule}{#3}%
  \setlength{\tabwidthx}{#1\tabwidth}\addtolength{\tabwidthx}{\tabrule}%

\setlength{\tabheightx}{#2\tabheight}\addtolength{\tabheightx}{-\tabheight}%
  \hbox to #1\tabwidth{%
 \hspace{-0.5\tabrule}\rule{\tabrule}{#2\tabheight}\hspace{-\tabrule}%
    \vbox to #2\tabheight{\hsize=\tabwidthx%
      \vspace{-0.5\tabrule}\hrule width\tabwidthx height\tabrule%
      \vspace{-0.5\tabrule}\vfil%
      \hbox to \tabwidthx{\hss#4\hss}%
        \vfil\vspace{-0.5\tabrule}%
      \hrule width\tabwidthx height\tabrule\vspace{-0.5\tabrule}}%
 \hspace{-\tabrule}\rule{\tabrule}{#2\tabheight}\hspace{-0.5\tabrule}}%
  \vspace{-\tabheightx}}}
\def\genblankbox#1#2{\vbox to \tabheight{\vfil\hbox to
#1\tabwidth{\hfil}}}
\def\tabbox#1#2#3{\gentabbox{#1}{#2}{0.4pt}{\strut #3}}
\newenvironment{tableau}{\bgroup\catcode`\:=13 \catcode`\.=13
  \catcode`\;=13 \catcode`\>=13 \catcode`\^=13
  \setlength{\tabheight}{3ex}\setlength{\tabwidth}{3ex}%
  \def\b##1##2##3{\gentabbox{##1}{##2}{1.2pt}{\vbox{##3}}}%
  \def\n##1##2##3{\gentabbox{##1}{##2}{0.4pt}{\vbox{##3}}}%
  \vbox\bgroup\offinterlineskip}{\egroup\egroup}
\newtheorem{theorem}{Theorem}[section]
\newtheorem{corollary}[theorem]{Corollary}
\newtheorem{lemma}[theorem]{Lemma}
\newtheorem{proposition}[theorem]{Proposition}
\newtheorem{conjecture}[theorem]{Conjecture}
\newtheorem*{theorem:main}{Theorem \ref{theorem:generators}}
\newtheorem*{lemma:technical2}{Lemma \ref{lemma:technical2}}
\newtheorem*{prop:amongspecial}{Proposition \ref{prop:amongspecial}}
\newtheorem*{prop:makespecial}{Proposition \ref{prop:makespecial}}
\newtheorem*{conjecture*}{Conjecture}
\theoremstyle{definition}
\newtheorem{definition}[theorem]{Definition}
\theoremstyle{remark}
\newtheorem{example}[theorem]{Example}
\newcommand{\aaa}{{\boldsymbol{a}}}
\begin{document}
\title{Module structure of cells in unequal parameter Hecke algebras}
\author{Thomas Pietraho}
\email{tpietrah@bowdoin.edu} \subjclass[2000]{20C08, 05E10}
\keywords{unequal parameter Hecke algebras, Kazhdan-Lusztig cells, domino tableaux}
\address{Department of Mathematics\\Bowdoin College\\Brunswick,
Maine 04011}

\begin{abstract}  A conjecture of C.~Bonnaf\'e, M.~Geck, L.~Iancu, and T.~Lam parameterizes Kazhdan-Lusztig left cells for unequal parameter Hecke algebras in type $B_n$ by  families of standard domino tableaux of arbitrary rank.  Relying on a family of properties outlined by G.~Lusztig and the recent work of C.~Bonnaf\'e, we verify the conjecture and describe the structure of each cell as a module for the underlying Weyl group.
\end{abstract}

\maketitle

\section{Introduction}

Consider a Coxeter system $(W,S)$, a positive weight function $L$, and the corresponding generic Iwahori-Hecke algebra $\mathcal{H}$.   As detailed by G.~Lusztig in  \cite{lusztig:unequal}, a choice of weight function gives rise to a partition of $W$ into left, right, and two-sided Kazhdan-Lusztig cells, each of which carries the structure of an $\mathcal{H}$- as well as a $W$-module.
The cell decomposition of $W$ is understood for all finite Coxeter groups and all choices of weight functions with the exception of type $B_n$. We focus our attention on this remaining case and write $W=W_n$. A weight function is then specified by a choice of two integer parameters $a$ and $b$ assigned to the simple reflections
in $W_n$:

\begin{center}
\begin{picture}(300,30)
\put( 50 ,10){\circle*{5}} \put( 50, 8){\line(1,0){40}} \put( 50,
12){\line(1,0){40}} \put( 48, 20){$b$} \put( 90 ,10){\circle*{5}}
\put(130 ,10){\circle*{5}} \put(230 ,10){\circle*{5}} \put( 90,
10){\line(1,0){40}} \put(130, 10){\line(1,0){25}} \put(170,
10){\circle*{2}} \put(180, 10){\circle*{2}} \put(190,
10){\circle*{2}} \put(205, 10){\line(1,0){25}} \put( 88 ,20){$a$}
\put(128, 20){$a$} \put(222, 20){$a$}
\end{picture}
\end{center}

Given $a,b \neq 0$, we write $s=\tfrac{b}{a}$ for
their quotient. We have the following description of cells due to C.~Bonnaf{\'e}, M.~Geck, L.~Iancu, and T.~Lam.  It is stated in terms of a family of generalized Robinson-Schensted algorithms $G_r$ which define bijections between $W_n$ and same-shape pairs of domino tableaux of rank $r$.

\begin{conjecture*}[\cite{bgil}]\label{conjecture:bgil2}
Consider a Weyl group $W_n$ of type $B_n$
with a weight function $L$ and parameter $s$ defined as above.
    \begin{enumerate}
        \item When $s \not\in \mathbb{N}$, let $r= \lfloor s \rfloor$.  Two elements of $W_n$ lie in the same Kazhdan-Lusztig left cell whenever they share the same right tableau in the image of $G_r$.
        \item  When $s \in \mathbb{N}$, let $r= s-1$.  Two elements of $W_n$ lie in the same
                Kazhdan-Lusztig left cell whenever their right tableaux in the image of $G_r$ are related by moving through a set of non-core open cycles.
    \end{enumerate}
\end{conjecture*}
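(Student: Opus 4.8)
The plan is to show that the partition of $W_n$ described by the conjecture — into fibers of the right-tableau map $G_r$ in case (1), and into classes for ``moving through non-core open cycles'' in case (2) — agrees with the partition into Kazhdan-Lusztig left cells. I would work entirely inside G.~Lusztig's abstract framework, taking as input that his properties P1--P15 hold for $W_n$ at the weight function $L$; this is what C.~Bonnaf\'e's recent work supplies. These properties make available the $\mathbf{a}$-function (constant on two-sided cells), the asymptotic ring $J$ with its homomorphism $\mathcal{H}\to J$, the compatibility of left cells with the preorder $\leq_L$, and Lusztig's parameterization of the left cells inside a two-sided cell by constructible representations. The proof then splits into two inclusions of partitions.

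For the first inclusion — that each conjectural class is contained in a single left cell — I would argue constructively. The fibers of $G_r$ are generated by elementary domino Knuth moves, and in case (2) one additionally allows the non-core open-cycle moves; it therefore suffices to realize every such move as a left-cell equivalence. Rather than verify this move-by-move for all configurations and all ranks, I would reduce to distinguished representatives: Proposition~\ref{prop:makespecial} would show that any element is joined to a \emph{special} one without leaving its left cell, and Proposition~\ref{prop:amongspecial} would show that the special elements lying in one conjectural class share a left cell. The latter is where Bonnaf\'e's explicit determination of cells in the reference asymptotic regime enters, together with Lemma~\ref{lemma:technical2}, which transfers the computation between ranks $r$.

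For the reverse inclusion — that no left cell is strictly larger than a conjectural class — I would use a counting argument anchored by the representation theory. Monotonicity of the $\mathbf{a}$-function along $\leq_L$ organizes $W_n$ into two-sided cells, and within each of these Lusztig's theory fixes the number of left cells as the number of associated constructible representations. Matching this against the number of conjectural classes lying in the corresponding two-sided cell forces each left cell to be exactly one class, completing the identification.

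I expect the integer case (2) to be the main obstacle, since there the cells genuinely merge and the equivalence ``moving through non-core open cycles'' must be shown to be neither too coarse nor too fine. The delicate half is constructive: unlike the Knuth moves of case (1), a cycle move is not a single elementary step, so realizing it as a left-cell equivalence seems to require the full strength of the special-representative reduction, while the counting argument rules out any further merging. Once the cell partition is pinned down, the $W$-module structure advertised in the abstract follows by computing the character of each cell from its special generator, as in Theorem~\ref{theorem:generators}, and identifying it with the appropriate constructible representation.
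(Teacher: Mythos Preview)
Your plan for the first inclusion is essentially a reinvention of what Bonnaf\'e has already done; the paper simply cites \cite{bonnafe:knuth} for the fact that, under {\bf P1--P15}, Kazhdan--Lusztig left cells are unions of the combinatorial cells of the appropriate rank. There is nothing wrong with reproving this, but the references you invoke (Proposition~\ref{prop:makespecial}, Proposition~\ref{prop:amongspecial}, Lemma~\ref{lemma:technical2}, Theorem~\ref{theorem:generators}) do not correspond to any statements in the paper, so as written the argument has no content.

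The genuine gap is in your reverse inclusion. You assert that ``within each [two-sided cell] Lusztig's theory fixes the number of left cells as the number of associated constructible representations.'' This is false. Geck's theorem says that the \emph{set of isomorphism classes} of left cell modules coincides with the set of constructible representations; it does not say the left cells are in bijection with them. Already in type $A$, a two-sided cell indexed by $\lambda$ contains $|\mathrm{SYT}(\lambda)|$ left cells, all carrying the same irreducible module, so there is one constructible representation but many left cells. A counting argument of the kind you describe therefore cannot separate the combinatorial cells inside a Kazhdan--Lusztig cell.

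The paper's route is quite different. It first extends Lusztig's formula $\dim\operatorname{Hom}_W([\mathfrak{C}],[\mathfrak{C}']) = |\mathfrak{C}\cap\mathfrak{C}'^{-1}|$ to the unequal-parameter setting via the asymptotic ring $J$ (Lemma~\ref{lemma:12.15}). This, together with the involutions corollary, forces the combinatorial cells inside a single Kazhdan--Lusztig cell to have pairwise disjoint shape-sets (Lemma~\ref{lemma:disjointshapes}). The paper then computes $[\mathfrak{C}]$ explicitly by tracking truncated induction and tensoring with sign on the level of rank-$r$ partitions (Corollaries~\ref{corollary:tensoringwithsign} and~\ref{corollary:truncated}, Lemma~\ref{lemma:inductionshapes}), obtaining $[\mathfrak{C}]\cong[\mathbb{T}]$ where $\mathbb{T}$ is the representing tableau set (Theorem~\ref{theorem:main}). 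Finally, the prior classification of constructible representations in \cite{pietraho:constructible} says every constructible module is $[\widetilde{\mathbb{T}}]$ for a \emph{single} combinatorial cell $\widetilde{\mathbb{T}}$; comparing shapes and invoking the disjointness lemma forces $|I|=1$. The argument is module-theoretic, not enumerative, and that distinction is exactly what your proposal is missing.
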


Significant progress has been made towards the verification of the above, which we detail in Section \ref{section:cellsB}.  Most recently, C.~Bonnaf\'e has shown that if a certain family of statements conjectured by G.~Lusztig is assumed to hold, then the conjecture holds if $s\not \in \mathbb{N}$, and  furthermore, if $s \in \mathbb{N}$, then Kazhdan-Lusztig left cells are unions of the sets described \cite{bonnafe:knuth}.  We sharpen this result, and verify that the conjecture holds in the latter case as well.

We concurrently describe the structure of Kazhdan-Lusztig left cells as $W_n$-modules.
The canonical parameter set for irreducible $W_n$-modules consists of ordered pairs of partitions $(d,f)$ where the the parts of $d$ and $f$ sum to $n$.  As detailed in Section \ref{section:irreducible}, there is a natural identification of this parameter set with the set of partitions $\mathcal{P}_r(n)$ of a fixed rank $r$.  Since $\mathcal{P}_r(n)$ corresponds exactly to the shapes of rank $r$ domino tableaux, the parametrization of Kazhdan-Lusztig left cells via standard tableaux of fixed rank in the above conjecture suggests a module structure for each cell for every choice of weight function. Mainly, the irreducible constituents of the module carried by each cell should correspond to the shapes of the rank $r$ tableaux of its elements, with $r$ determined from the parameter $s$ as in the conjecture.  We verify that this suggested module structure is indeed the one carried by each cell.

Our approach is based on M.~Geck's characterization of left cells as constructible representations; that is, those representations which are obtained by successive truncated parabolic induction and tensoring with the sign representation, see \cite{geck:constructible}.  In Section \ref{section:hecke}, we detail the general construction of Kazhdan-Lusztig cells in an unequal parameter Hecke algebra and extend a result of G.~Lusztig on the intersection of left and right cells to the unequal parameter setting.  In Section \ref{section:typeB}, we detail the situation in type $B_n$ and the relevant combinatorics.  Section \ref{section:constructible} examines constructible representations and provides a combinatorial description of truncated parabolic induction and tensoring with sign, mimicking the work of W.~M.~McGovern in the equal parameter case \cite{mcgovern:leftcells}.  The final section contains the proof of the main results.


\section{Unequal Parameter Hecke Algebras}\label{section:hecke}

We briefly recount the definitions of unequal parameter Hecke algebras and the corresponding Kazhdan-Lusztig cells, following \cite{lusztig:unequal}.

\subsection{Kazhdan-Lusztig Cells}\label{section:klcells}
Consider  a Coxeter system $(W,S)$ and let $\ell$ be the usual  length function.  A {\it weight function} $L:W \rightarrow \mathbb{Z}$ satisfies $L(xy)=L(x)+L(y)$ whenever $\ell(xy)=\ell(x)+\ell(y)$ and is uniquely determined by its values on $S$.  We will consider those weight functions which take positive values on all $s \in S$.

Let $\mathcal{H}$ be the generic
Iwahori-Hecke algebra over $\mathcal{A}= \mathbb{Z}[v, v^{-1}]$ with
parameters $\{v_s \, | \, s \in S\}$, where  $v_x = v^{L(x)}$ for
all $x \in W$. The algebra $\mathcal{H}$ is free over $\mathcal{A}$
and has a basis $\{T_x \, | \, x \in W\}$. Multiplication in $\mathcal{H}$ takes the form
$$T_s T_x = \left\{
        \begin{array}{ll}
            T_{sx} & \text{if $\ell(sx) > \ell(x)$, and}\\
            T_{sx}+(v_s-v_s^{-1}) T_x & \text{if $\ell(sx) < \ell(x)$}
        \end{array}
        \right.
        $$
As in \cite{lusztig:unequal}(5.2), it is possible to construct a Kazhdan-Lusztig basis of $\mathcal{H}$
which we denote by $\{C_x \; | \; x \in W\}$. In terms of it, multiplication has the form
$$ C_x C_y = \sum_{z \in W} h_{xyz} C_z.$$
for some $h_{xyz} \in \mathcal{A}$.  Although we suppress it in the notation, all of these notions depend on the specific choice of weight function $L$.

\begin{definition}
Fix $(W,S)$  a Coxeter system with a weight
function $L$.
We will write $y \leq_{\mathcal{L}} x$ if there exists $s \in S$ such that $C_{y}$ appears with a non-zero coefficient in $C_s C_x$.  By taking the transitive closure, this binary relation defines a preorder on $W$ which we also denote by $\leq_{\mathcal{L}}$.
Let $y \leq_{\mathcal{R}} x$ iff $y^{-1} \leq_{\mathcal{L}} x^{-1}$ and define $\leq_{\mathcal{LR}}$ as the pre-order generated
by $\leq_{\mathcal{L}}$ and $\leq_{\mathcal{R}}$.
\end{definition}
Each of the above preorders defines equivalence relations which we denote by $\sim_\mathcal{L},$ $\sim_\mathcal{R}$, and $\sim_\mathcal{LR}$ respectively.  The resulting equivalence classes are called the {\it left, right, and two-sided Kazhdan-Lusztig cells} of $W$.

As described in \cite{lusztig:unequal}(8.3), Kazhdan-Lusztig cells carry representations of $\mathcal{H}$.  If $\mathfrak{C}$ is a
Kazhdan-Lusztig left cell and $x \in \mathfrak{C}$, then define
$$[\mathfrak{C}]_\mathcal{A} = \bigoplus_{y \leq_\mathcal{L} x} \mathcal{A} C_{y} \Big/ \bigoplus_{y \leq_\mathcal{L} x, y \notin \mathfrak{C} } \mathcal{A} C_{y}.$$

This is a quotient of two left ideals in $\mathcal{H}$ and consequently is itself a
left $\mathcal{H}$-module; it does not depend on the specific choice of $x \in \mathfrak{C}$, is
free over $\mathcal{A}$, and has a basis $\{e_x \; | \; x \in
\mathfrak{C}\}$ indexed by elements of $\mathfrak{C}$ with $e_x$
the image of $C_x$ in the above quotient.
The action of  $\mathcal{H}$ on $[\mathfrak{C}]_\mathcal{A}$ is determined by $$C_x e_y = \sum_{z \in
\mathfrak{C}} h_{xyz} e_z$$ for $x \in W$ and $y \in \mathfrak{C}$. A Kazhdan-Lusztig left cell gives rise to a $W$-module $[\mathfrak{C}]$ by restricting $[\mathfrak{C}]_\mathcal{A}$ to scalars.  The same construction can be used to define module structures on the right and two-sided cells of $W$.

\subsection{A Family of Properties} The main results of this paper rely on a family of conjectures formulated by G.~Lusztig in \cite[\S 14]{lusztig:unequal}.  In the equal parameter case, that is when $L$ is a multiple of the length function $\ell$, a number of results about Kazhdan-Lusztig cells depend on positivity results derived by geometric methods of intersection cohomology.  Unfortunately, this positivity does not hold for unequal parameter Hecke algebras; for examples see \cite[\S 6]{lusztig:leftcells} and \cite[2.7]{geck:leftcells}.
As a substitute,  G.~Lusztig detailed a list properties which both, axiomatize known equal-parameter results, and outline methods of approaching non-positivity in general.  

In order to list Lusztig's conjectures, we must first define two integer-valued functions on $W$.
For any $z \in W$, let $\aaa(z)$ be the smallest non-negative integer so that $h_{xyz} \in v^{\aaa(z)} \mathbb{Z}[v^{-1}]$ for every $x$ and $y$ in $W$ and write $\gamma_{xyz^{-1}}$ for the constant term of $v^{-\aaa(z)} h_{xyz}$.  If $p_{xy}$ is defined by $C_y = \sum_{x\in W} p_{xy} T_x$, then \cite{lusztig:unequal}(5.4) shows that $p_{1z}$ is non-zero.  We write
    $$p_{1z} = n_z v^{-\Delta(z)} + \text{ terms of smaller degree in $v$}$$
thereby defining a constant $n_z$ and integer $\Delta(z)$ for every $z\in W$.
Finally, let
$$\mathcal{D} = \{z \in W \mid \aaa(z)=\Delta(z)\}.$$
Lusztig has conjectured the following statements are true in the general setting of unequal parameter Hecke algebras:

\begin{itemize}
\item[\bf P1.] For any $z\in W$ we have $\aaa(z)\leq \Delta(z)$.
\item[\bf P2.] If $d \in \mathcal{D}$ and $x,y\in W$ satisfy $\gamma_{x,y,d}\neq 0$,
then $x=y^{-1}$.
\item[\bf P3.] If $y\in W,$ there exists a unique $d\in \mathcal{D}$ such that
$\gamma_{y^{-1},y,d}\neq 0$.
\item[\bf P4.] If $z'\leq_{\mathcal{L}\mathcal{R}} z$ then $\aaa(z')\geq \aaa(z)$. Hence, if
$z'\sim_{\mathcal{L}\mathcal{R}} z$, then $\aaa(z)=\aaa(z')$.
\item[\bf P5.] If $d\in \mathcal{D}$, $y\in W$, $\gamma_{y^{-1},y,d}\neq 0$, then
$\gamma_{y^{-1},y,d}=n_d=\pm 1$.
\item[\bf P6.] If $d\in \mathcal{D}$, then $d^2=1$.
\item[\bf P7.] For any $x,y,z\in W$, we have $\gamma_{x,y,z}=\gamma_{y,z,x}$.
\item[\bf P8.] Let $x,y,z\in W$ be such that $\gamma_{x,y,z}\neq 0$. Then
$x\sim_{\mathcal{L}} y^{-1}$, $y \sim_{\mathcal{L}} z^{-1}$, and $z\sim_{\mathcal{L}} x^{-1}$.
\item[\bf P9.] If $z'\leq_{\mathcal{L}} z$ and $\aaa(z')=\aaa(z)$, then $z'\sim_{\mathcal{L}}z$.
\item[\bf P10.] If $z'\leq_{\mathcal{R}} z$ and $\aaa(z')=\aaa(z)$, then $z'\sim_{\mathcal{R}}z$.
\item[\bf P11.] If $z'\leq_{\mathcal{L}\mathcal{R}} z$ and $\aaa(z')=\aaa(z)$, then
$z'\sim_{\mathcal{L}\mathcal{R}}z$.
\item[\bf P12.] Let $I\subseteq S$ and $W_I$ be the parabolic subgroup
defined by $I$. If $y\in W_I$, then $\aaa(y)$ computed in terms of $W_I$
is equal to $\aaa(y)$ computed in terms of $W$.
\item[\bf P13.] Any left cell $\mathfrak{C}$ of $W$ contains a unique element
$d\in \mathcal{D}$. We have $\gamma_{x^{-1},x,d}\neq 0$ for all $x\in \mathfrak{C}$.
\item[\bf P14.] For any $z\in W$, we have $z \sim_{\mathcal{L}\mathcal{R}} z^{-1}$.
\item[\bf P15.] If $v'$ is an indeterminate and $h'_{xyz}$ is obtained from $h_{xyz}$ via the substitution $v \mapsto v'$, then     whenever $\aaa(w)=\aaa(y)$, we have
    $$\sum_{y'} h'_{wx'y'}h_{xy'y}=\sum_{y'} h_{xwy'}h'_{y'x'y}.$$

\end{itemize}

The statements {\bf P1-P15} are known to hold for finite Weyl groups in the equal parameter case by work of Kazhdan-Lusztig \cite{kazhdan:lusztig:schubert} and Springer \cite{springer:intersection}.  If the Coxeter system is of type $I_2(m)$, $H_3$, or $H_4$, they follow from work of Alvis \cite{alvis:left} and DuCloux \cite{ducloux:positivity}.  In the unequal parameter case, {\bf P1-P15} have been verified by Geck in types $I_2(m)$ and $F_4$ \cite{geck:remarks}, and in the so-called asymptotic case of type $B_n$ by Geck-Iancu \cite{geck:iancu} and Geck \cite{geck:relative}, \cite{geck:remarks}.  Although the geometric approach from which the above follow in the equal parameter case is not available in the general unequal parameter case, it seems that it may not be required.  At least in type $A$, Geck has shown that {\bf P1-P15} hold via elementary, purely algebraic methods \cite{geck:murphy}.

\subsection{The Asymptotic Ring $J$} \label{section:asymptotic}

The goal of this section is to verify Lemma 12.15 of \cite{lusztig:characters} in our more general setting.  We begin with a brief discussion of Lusztig's ring $J$ which can be viewed as an asymptotic version of $\mathcal{H}.$  Although originally defined in the equal parameter case, its construction also makes sense in the setting of unequal parameter Hecke algebras under the the assumption that the conjectures {\bf P1-P15} hold.  Using the methods developed in \cite{lusztig:unequal}, $J$ provides us with a way of studying the left-cell representations of $\mathcal{H}$.

Recall the integers $\gamma_{xyz}$ defined for all $x,$ $y,$ and $z$ in $W$ as the constant terms of $v^{\aaa(z)} h_{xyz^{-1}}.$  Then $J$ is the free abelian group with basis $\{t_x \; | \; x \in W\}$.  To endow it with a ring structure,  define a bilinear product on $J$ by
$$t_x \cdot t_y = \sum_{z \in W} \gamma_{xyz} t_{z^{-1}}$$
for $x$ and $y$ in $W$.  Conjectures {\bf P1-P15} allow us to state the following results.
\begin{theorem}[\cite{lusztig:unequal}] Assuming conjectures {\bf P1-P15}, the following hold:
 \begin{enumerate}
    \item $J$ is an associative ring with identity element $1_J = \sum_{d \in \mathcal{D}} n_d t_d.$
    \item The group algebra $\mathbb{C}[W]$ is isomorphic as a $\mathbb{C}$-algebra to $J_\mathbb{C} = \mathbb{C} \otimes_\mathbb{Z} J.$
 \end{enumerate}
\end{theorem}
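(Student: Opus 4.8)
The plan is to follow Lusztig's original construction of $J$, checking at each step that only the axioms \textbf{P1}--\textbf{P15} and the associativity of $\mathcal{H}$ itself are used, rather than any positivity special to the equal-parameter case. For associativity I would expand the two triple products: writing out $(t_x\cdot t_y)\cdot t_z$ and $t_x\cdot(t_y\cdot t_z)$ using the definition of the product and collecting the coefficient of each $t_{w^{-1}}$, the identity $(t_x t_y)t_z=t_x(t_y t_z)$ reduces to the numerical statement
$$\sum_{u\in W}\gamma_{xyu}\,\gamma_{u^{-1}zw}=\sum_{u\in W}\gamma_{yzu}\,\gamma_{xu^{-1}w}$$
for all $x,y,z,w\in W$. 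Both sides I would identify with the leading coefficient, in a suitable power of $v$, of the two sides of the associativity relation $\sum_u h_{xyu}h_{uzw}=\sum_u h_{xuw}h_{yzu}$ already holding in $\mathcal{H}$. The cyclic symmetry \textbf{P7} matches the two expressions, while the monotonicity of the $\aaa$-function under the cell preorders (\textbf{P4}, together with \textbf{P8}) guarantees that only top-degree terms survive the comparison; these axioms play exactly the role of the degree estimates that were automatic from positivity in the equal-parameter setting.

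Next I would verify that $1_J=\sum_{d\in\mathcal{D}}n_d\,t_d$ is a two-sided identity. Computing
$$t_x\cdot 1_J=\sum_{d\in\mathcal{D}}n_d\sum_{z\in W}\gamma_{xdz}\,t_{z^{-1}},$$
I would apply \textbf{P7} to rewrite $\gamma_{xdz}=\gamma_{zxd}$, whereupon \textbf{P2} forces $z=x^{-1}$ and every term but $t_x$ drops out. The surviving coefficient is $\sum_{d\in\mathcal{D}}n_d\,\gamma_{x^{-1}xd}$, which by the uniqueness in \textbf{P3} reduces to a single $d$ and by \textbf{P5} equals $n_d^2=1$ since $n_d=\pm1$. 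A symmetric computation, again via \textbf{P7} and \textbf{P2}, handles $1_J\cdot t_x=t_x$, completing part (1).

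For part (2) I would introduce Lusztig's comparison map $\phi:\mathcal{H}\to J_{\mathcal{A}}=\mathcal{A}\otimes_{\mathbb{Z}}J$, defined on the Kazhdan--Lusztig basis by a formula of the shape
$$\phi(C_x)=\sum_{d\in\mathcal{D}}\;\sum_{z:\,\aaa(z)=\aaa(d)}h_{x,d,z}\,t_z.$$
The heart of the matter is to show $\phi$ is a homomorphism of $\mathcal{A}$-algebras; this is exactly where axiom \textbf{P15}, which relates products of the $h_{xyz}$ across the $\aaa$-function thresholds, does the essential work, standing in for the positivity argument. Granting this, specializing $v\mapsto1$ turns $\mathcal{H}$ into $\mathbb{C}[W]$ and $J_{\mathcal{A}}$ into $J_{\mathbb{C}}$, producing an algebra map $\phi_1:\mathbb{C}[W]\to J_{\mathbb{C}}$.

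Finally, bijectivity of $\phi_1$ follows from a dimension count---both $\mathbb{C}[W]$ and $J_{\mathbb{C}}$ are free of rank $|W|$---together with the observation that $\phi$ is, up to the leading-coefficient normalization, triangular with respect to the cell filtration, hence injective and therefore an isomorphism. The main obstacle I anticipate is verifying the homomorphism property of $\phi$: all the bookkeeping that positivity trivialized in the equal-parameter case must now be extracted from \textbf{P4}, \textbf{P7}, \textbf{P8}, and especially \textbf{P15}, and it is precisely the interplay of these axioms that makes $J$ behave like an asymptotic, associated-graded limit of $\mathcal{H}$. Everything else---associativity, the unit, and the final dimension comparison---is comparatively formal once that homomorphism is in hand.
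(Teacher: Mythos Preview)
The paper does not prove this theorem at all: it is stated with attribution to Lusztig's monograph \cite{lusztig:unequal} and used as a black box, so there is no in-paper argument to compare against. Your proposal is a reasonable outline of how Lusztig's own proof in Chapters~18 and~20 of that monograph proceeds, and nothing more is expected here.

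Two small remarks on accuracy. First, your formula for the comparison map is missing the sign $n_d$: Lusztig's definition is
\[
\phi(C_x)=\sum_{\substack{z\in W,\; d\in\mathcal{D}\\ \aaa(z)=\aaa(d)}} h_{x,d,z}\,n_d\,t_z,
\]
and without that factor the map fails to send $1_\mathcal{H}$ to $1_J$. Second, the claim that $\phi_1$ is bijective by ``triangularity plus dimension count'' glosses over the real content: one shows that with respect to suitable bases the matrix of $\phi$ over $\mathcal{A}$ is upper triangular with diagonal entries in $1+v^{-1}\mathbb{Z}[v^{-1}]$ (this uses \textbf{P1}, \textbf{P3}, \textbf{P5}, \textbf{P13}), so specializing $v\mapsto 1$ gives a unipotent, hence invertible, matrix. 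Your sketch is otherwise faithful to the source.
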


Following  \cite[\S 20.2]{lusztig:unequal}, we will write $E_\spadesuit$ for the $J_\mathbb{C}$-module corresponding to a $\mathbb{C}[W]$-module $E$.  It shares its underlying space with $E$, while the action of an element of $J_\mathbb{C}$ is defined by the action of its image under the isomorphism with $\mathbb{C}[W]$.
Consider a left cell $\mathfrak{C}$ of $W$ and define $J^\mathfrak{C}_\mathbb{C}$ to be $\oplus_{x \in \mathfrak{C}} \mathbb{C} t_x.$  By {\bf P8}, this is a left ideal in $J_\mathbb{C}.$  Furthermore,

\begin{theorem}[\cite{lusztig:unequal}]\label{theorem:lusztig2} Assuming that the conjectures {\bf P1-P15} hold, the $J_\mathbb{C}$-modules
$J^\mathfrak{C}_\mathbb{C}$ and $[\mathfrak{C}]_\spadesuit$ are isomorphic.
\end{theorem}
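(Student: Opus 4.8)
The plan is to follow Lusztig's proof of Lemma~12.15 in \cite{lusztig:characters}, systematically replacing the positivity properties used in the equal parameter setting by the axioms {\bf P1--P15}. The central object is the $\mathcal{A}$-algebra homomorphism $\phi \colon \mathcal{H} \to J_\mathcal{A} := \mathcal{A} \otimes_\mathbb{Z} J$ underlying the isomorphism $\mathbb{C}[W] \cong J_\mathbb{C}$ of the preceding theorem; on the Kazhdan-Lusztig basis it is given by
$$\phi(C_w) = \sum_{\substack{d \in \mathcal{D},\, z \in W \\ \aaa(z)=\aaa(d)}} h_{w,d,z}\, n_d\, t_z,$$
and its specialization at $v=1$ is precisely the algebra isomorphism $\phi_\mathbb{C}\colon \mathbb{C}[W] \xrightarrow{\sim} J_\mathbb{C}$. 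That $\phi$ is a unital homomorphism, and that $\phi_\mathbb{C}$ is an isomorphism, is the content of the earlier theorem and may be assumed.

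First I would transfer the $J_\mathcal{A}$-module structure on $J^\mathfrak{C}_\mathcal{A} := \mathcal{A}\otimes_\mathbb{Z} J^\mathfrak{C}$ to an $\mathcal{H}$-module structure by letting $h\in\mathcal{H}$ act as left multiplication by $\phi(h)$; this is well defined because $J^\mathfrak{C}$ is a left ideal by {\bf P8}. Next I would construct an $\mathcal{A}$-linear map $\Theta\colon [\mathfrak{C}]_\mathcal{A} \to J^\mathfrak{C}_\mathcal{A}$ carrying $e_x$ to $t_x$ modulo a correction that is of strictly lower order with respect to the $\aaa$-function, and verify that $\Theta$ intertwines the two $\mathcal{H}$-actions, i.e. that $\Theta(C_w e_y) = \phi(C_w)\,\Theta(e_y)$. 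Comparing coefficients reduces this to identities among the structure constants $h_{w,y,z}$ and the leading coefficients $\gamma_{x,y,z}$: the anchoring element is the unique $d \in \mathcal{D}\cap\mathfrak{C}$ supplied by {\bf P13}, the cyclic symmetry {\bf P7} and the cell-containment {\bf P8} dictate which $\gamma$ survive, and the monotonicity {\bf P4} guarantees that the correction terms are strictly triangular, so that $\Theta$ is invertible over $\mathcal{A}$.

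The final step is to specialize $v\mapsto 1$. The $\mathcal{H}$-module isomorphism $\Theta$ becomes an isomorphism of $\mathbb{C}[W]$-modules between $[\mathfrak{C}]$ and $J^\mathfrak{C}_\mathbb{C}$, the latter viewed as a $\mathbb{C}[W]$-module through $\phi_\mathbb{C}$. Transporting along $\phi_\mathbb{C}$ then identifies this object with the $J_\mathbb{C}$-module $J^\mathfrak{C}_\mathbb{C}$ on one side and with $[\mathfrak{C}]_\spadesuit$ on the other, which is exactly the asserted isomorphism.

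I expect the main obstacle to be establishing the intertwining property of $\Theta$ without recourse to positivity. In the equal parameter case the requisite coefficient identities follow from geometric positivity of the $h_{x,y,z}$; in the present generality they must instead be extracted from {\bf P15}, whose two-variable commutation identity, valid precisely when $\aaa(w)=\aaa(y)$, is what forces the action through $\phi$ to agree with the intrinsic cell action after specialization. Verifying that {\bf P15}, combined with {\bf P4} and {\bf P7}, yields the homomorphism property in each degree, and in particular that no lower-order terms obstruct the triangularity of $\Theta$, is the delicate heart of the argument.
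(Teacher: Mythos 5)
Your proposal cannot be matched against a proof in the paper, because the paper offers none: Theorem \ref{theorem:lusztig2} is imported wholesale from Lusztig's book \cite{lusztig:unequal}, where it is the outcome of the theory of the homomorphism $\phi\colon\mathcal{H}\to J_{\mathcal{A}}$ developed there. Your skeleton --- the homomorphism $\phi$, transport of the module structure, a comparison map $e_x\mapsto t_x$, specialization at $v=1$ --- is indeed the route taken in that source. However, the step you yourself single out as the heart of the matter, equivariance of the comparison map, is not only left unproved but is attacked with tools that would not carry it out.

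First, the triangularity you plan to exploit does not exist: by \textbf{P4} the function $\aaa$ is constant on the two-sided cell containing $\mathfrak{C}$, hence on $\mathfrak{C}$ itself, so inside $J^{\mathfrak{C}}_{\mathcal{A}}$ there are no basis vectors of ``strictly lower order with respect to the $\aaa$-function'' in which your correction terms could live; the map must be taken to be exactly $e_x\mapsto t_x$ (and in fact that suffices). Second, \textbf{P15} is misattributed: it is needed to prove that $\phi$ is an algebra homomorphism, which you have already granted yourself by invoking the preceding theorem; it plays no role in the intertwining. What actually drives the argument is the distinguished involution $d_0\in\mathcal{D}\cap\mathfrak{C}$ of \textbf{P13}: properties \textbf{P2}, \textbf{P3}, \textbf{P5}, \textbf{P7} give $t_z t_{d_0}=n_{d_0}t_z$ for $z\in\mathfrak{C}$ and $t_z t_{d_0}=0$ for $z\notin\mathfrak{C}$, and \textbf{P9} then collapses the sum defining $\phi(C_w)\,t_{d_0}$ to $\sum_{z\in\mathfrak{C}}h_{w,d_0,z}t_z$, which is precisely the image of $C_w e_{d_0}$ under $e_z\mapsto t_z$. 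This yields equivariance only on the cyclic submodule $\mathcal{H}e_{d_0}$; to extend it to all of $[\mathfrak{C}]_{\mathcal{A}}$ one needs the further leading-term identity $h_{w,d_0,z}=n_{d_0}\delta_{w,z}v^{\aaa(d_0)}+(\text{lower degree})$ for $w,z\in\mathfrak{C}$ (again a consequence of \textbf{P2}, \textbf{P5}, \textbf{P7}, \textbf{P13}), which shows that $e_{d_0}$ generates $[\mathfrak{C}]_{\mathcal{A}}\otimes_{\mathcal{A}}\mathbb{C}(v)$; the equivariance identities, having coefficients in $\mathcal{A}$, then descend from $\mathbb{C}(v)$ back to $\mathcal{A}$ and finally specialize at $v=1$. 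Without this anchoring-and-generation argument, your ``verify that $\Theta$ intertwines'' is exactly the statement to be proved, and the axioms you name for it (\textbf{P4}, \textbf{P7}, \textbf{P8}, \textbf{P15}) do not suffice.
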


We are ready to address Lemma 12.15 of \cite{lusztig:characters}.  Its original proof relies on a characterization of left cells in terms of the dual bases $\{C_x\}$ and $\{D_x\}$ stated in  \cite{lusztig:characters}(5.1.14). This result in turn relies on positivity properties which do not hold in the unequal parameter case and therefore a new approach to the lemma is required.  We owe the idea of using $J$ in the present proof to M.~Geck.

\begin{lemma}\label{lemma:12.15} Assume that conjectures {\bf P1-P15} hold.  If $\mathfrak{C}$ and $\mathfrak{C'}$ are two left cells in $W$ with respect to a weight function $L$, then
        $$\dim \textup{Hom}_W([\mathfrak{C}],[\mathfrak{C'}]) = | \mathfrak{C} \cap \mathfrak{C'}^{-1} |.$$
\end{lemma}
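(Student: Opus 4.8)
The plan is to transport the whole computation from the group algebra into Lusztig's asymptotic ring $J$, where the cell modules become left ideals cut out by explicit idempotents and the Hom-space acquires a combinatorial basis. Since the functor $E \mapsto E_\spadesuit$ is restriction of scalars along the algebra isomorphism $\mathbb{C}[W] \cong J_\mathbb{C}$, it is an equivalence of module categories and therefore preserves Hom-spaces. Regarding $[\mathfrak{C}]$ as a $\mathbb{C}[W]$-module and combining this with Theorem \ref{theorem:lusztig2}, I obtain
$$\dim \textup{Hom}_W([\mathfrak{C}],[\mathfrak{C'}]) = \dim \textup{Hom}_{J_\mathbb{C}}([\mathfrak{C}]_\spadesuit,[\mathfrak{C'}]_\spadesuit) = \dim \textup{Hom}_{J_\mathbb{C}}(J^\mathfrak{C}_\mathbb{C}, J^{\mathfrak{C'}}_\mathbb{C}).$$
Everything then reduces to computing the right-hand dimension inside $J$.

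The key step is to realize each left ideal $J^\mathfrak{C}_\mathbb{C}$ as $J_\mathbb{C}\,e_\mathfrak{C}$ for a genuine idempotent. By {\bf P13} the cell $\mathfrak{C}$ contains a unique distinguished involution $d = d_\mathfrak{C} \in \mathcal{D}$, and I set $e_\mathfrak{C} = n_d\, t_d$. Using the cyclic symmetry {\bf P7} together with {\bf P2} one checks that in the product $t_x \cdot t_d = \sum_z \gamma_{xdz} t_{z^{-1}}$ only the term $z = x^{-1}$ survives, so $t_x t_d = \gamma_{x^{-1},x,d}\, t_x$; by {\bf P3}, {\bf P5}, and {\bf P13} this coefficient equals $n_d = \pm 1$ exactly when $x \in \mathfrak{C}$ and is $0$ otherwise. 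Together with {\bf P6} (so $d^2 = 1$ and $t_d t_d = n_d t_d$) this yields $e_\mathfrak{C}^2 = e_\mathfrak{C}$, as well as $t_x e_\mathfrak{C} = t_x$ for $x \in \mathfrak{C}$ and $t_x e_\mathfrak{C} = 0$ for $x \notin \mathfrak{C}$; hence $J_\mathbb{C}\,e_\mathfrak{C} = J^\mathfrak{C}_\mathbb{C}$. Running the identical argument on the left (again via {\bf P2}, {\bf P7}) gives $e_\mathfrak{C} t_w = t_w$ precisely when $w^{-1} \in \mathfrak{C}$, i.e. when $w \in \mathfrak{C}^{-1}$.

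With both idempotents in hand I apply the standard identification $\textup{Hom}_{J_\mathbb{C}}(J_\mathbb{C}e_\mathfrak{C},\, J_\mathbb{C}e_{\mathfrak{C'}}) \cong e_\mathfrak{C} J_\mathbb{C} e_{\mathfrak{C'}}$ given by evaluation at $e_\mathfrak{C}$ (valid for any ring and any idempotents, so semisimplicity is not even needed here). Expanding $e_\mathfrak{C} t_w e_{\mathfrak{C'}}$ by the two one-sided rules shows that $e_\mathfrak{C} t_w e_{\mathfrak{C'}} = t_w$ when $w \in \mathfrak{C}^{-1} \cap \mathfrak{C'}$ and vanishes otherwise. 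Thus $\{\, t_w \mid w \in \mathfrak{C}^{-1} \cap \mathfrak{C'}\,\}$ is a basis, giving $\dim e_\mathfrak{C} J_\mathbb{C} e_{\mathfrak{C'}} = |\mathfrak{C}^{-1} \cap \mathfrak{C'}| = |\mathfrak{C} \cap \mathfrak{C'}^{-1}|$, where the final equality is the inversion bijection. Feeding this back into the first display proves the lemma.

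The part I expect to be delicate is the bookkeeping that establishes the ``partial identity'' behaviour of the elements $t_d$. The whole argument rests on reading {\bf P2}, {\bf P3}, {\bf P5}, {\bf P6}, {\bf P7}, {\bf P13} with exactly the right index conventions, in particular the inverses built into the definition of $\gamma_{xyz}$ and into the product $t_x \cdot t_y = \sum_z \gamma_{xyz}\, t_{z^{-1}}$; a single misplaced inverse would interchange $\mathfrak{C}$ with $\mathfrak{C}^{-1}$ and spoil the final count. Once the one-sided multiplication rules are pinned down correctly, the remainder is formal.
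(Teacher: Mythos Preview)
Your argument is correct, and it starts from the same place as the paper---transporting to $J_\mathbb{C}$ via Theorem~\ref{theorem:lusztig2}---but finishes by a different mechanism. The paper also uses the partial-identity behaviour of $t_d$, but only to show that the right-multiplication maps $\phi_x : t_y \mapsto t_y t_x$ (for $x \in \mathfrak{C}^{-1}\cap\mathfrak{C'}$) are linearly independent, yielding the inequality $\dim \textup{Hom}_{J_\mathbb{C}}(J^\mathfrak{C}_\mathbb{C}, J^{\mathfrak{C'}}_\mathbb{C}) \geq |\mathfrak{C}^{-1}\cap\mathfrak{C'}|$. Equality is then extracted by a global counting trick: summing the inequality over all pairs of cells, both sides become $|W|$ (the right side trivially, the left via $\bigoplus_\mathfrak{C}[\mathfrak{C}]\cong \textup{Reg}_W$), so each inequality must be an equality.

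Your route is more direct: by recognising $J^\mathfrak{C}_\mathbb{C} = J_\mathbb{C}\,e_\mathfrak{C}$ with $e_\mathfrak{C}$ a genuine idempotent, you invoke $\textup{Hom}_A(Ae,Af)\cong eAf$ and read off the dimension immediately, bypassing the global sum entirely. This buys you an exact answer in one step and isolates precisely which $t_w$ contribute. The paper's approach, by contrast, avoids having to verify the full two-sided Peirce decomposition and only needs linear independence, at the cost of the extra summation argument. Both rely on the same subset \textbf{P2}, \textbf{P3}, \textbf{P5}, \textbf{P7}, \textbf{P13} of Lusztig's conjectures (you additionally cite \textbf{P6}, though $e_\mathfrak{C}^2=e_\mathfrak{C}$ already follows from the one-sided rule $t_d e_\mathfrak{C}=t_d$ without it).
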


\begin{proof}

Let $x \in \mathfrak{C}^{-1} \cap \mathfrak{C'}$ and define a map $\phi_x$ on $J^\mathfrak{C}_\mathbb{C}$ via $\phi_x(t_y) = t_y t_x.$  With $x$ and $y$ as above, we can write
$$t_y t_x = \sum \gamma_{yxz} t_{z^{-1}}.$$ For $\gamma_{yxz} \neq 0$,  {\bf P8} implies $x \sim_\mathcal{L} z^{-1}.$ Since $x \in \mathfrak{C}'$, this forces $t_y t_x$ to lie in $J^\mathfrak{C'}_\mathbb{C},$ and we have in fact defined a map $\phi_x: J^\mathfrak{C}_\mathbb{C} \rightarrow J^\mathfrak{C'}_\mathbb{C}.$

We will show that as $x$ runs over the set $\mathfrak{C}^{-1} \cap \mathfrak{C'}$, the maps $\phi_x$ are linearly independent.  So assume that for some constants $a_x$ we have
$$\sum_{x \in \mathfrak{C}^{-1} \cap \mathfrak{C'}} a_x \phi_x = 0 \text{ and, consequently } \sum_{x \in \mathfrak{C}^{-1} \cap \mathfrak{C'}} a_x t_y t_x = 0$$
for all $y \in \mathfrak{C}.$
  In particular, if $d$ is the unique element in $\mathcal{D} \cap \mathfrak{C}$ guaranteed by {\bf P13} then we also have $$\sum_{x \in \mathfrak{C}^{-1} \cap \mathfrak{C'}} a_x t_d t_x = \sum_{y \in \mathfrak{C}^{-1} \cap \mathfrak{C'}} \pm a_x t_x =0,$$ where the first  equality follows from {\bf P2, P5, P7,} and {\bf P13}.  But this means that $a_x=0$ for all relevant $x$, or in other words, that the $\phi_x$ are linearly independent.  We can therefore conclude that
$ \textup{dim Hom}_{J_\mathbb{C}}(J^\mathfrak{C}_\mathbb{C}, J^\mathfrak{C'}_\mathbb{C}) \geq | \mathfrak{C}^{-1} \cap \mathfrak{C'} |.$
Since this inequality is true for all pairs of left cells $\mathfrak{C}$ and $\mathfrak{C'}$ in $W$, we have
$$
\sum_{\mathfrak{C},\mathfrak{C'}} \textup{ dim Hom}_{J_\mathbb{C}} (J^\mathfrak{C}_\mathbb{C}, J^\mathfrak{C'}_\mathbb{C})  \geq \sum_{\mathfrak{C},\mathfrak{C'}} | \mathfrak{C}^{-1} \cap \mathfrak{C'} |.
$$
The right side of this inequality is just the order of $W$ since each of its elements lies in a unique left and a unique right cell.  On the other hand, by the correspondence resulting from Theorem \ref{theorem:lusztig2} the left side is $$
 \textup{ dim Hom}_{J_\mathbb{C}} \Big( \sum_\mathfrak{C} J^\mathfrak{C}_\mathbb{C}, \sum_\mathfrak{C'}J^\mathfrak{C'}_\mathbb{C}\Big) \\
 = \textup{ dim Hom}_W (\textup{Reg}_W,\textup{Reg}_W)\\  = |W|.
$$
Hence the original inequality must be in fact an equality and the lemma follows.
\end{proof}

We immediately obtain the following corollary, whose proof is identical to that of \cite{lusztig:characters}(12.17).

\begin{corollary}\label{corollary:involutions}  Assume that conjectures {\bf P1-P15} hold and that the left cell modules of $W$ with respect to a weight function $L$ are multiplicity-free.  Then $\mathfrak{C} \cap \mathfrak{C}^{-1}$ is the set of involutions in $\mathfrak{C}$.
\end{corollary}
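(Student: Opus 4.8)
The plan is to deduce the statement from Lemma~\ref{lemma:12.15} by a global counting argument. First I would record the trivial inclusion: if $w \in \mathfrak{C}$ satisfies $w^2=1$, then $w = w^{-1}$, so $w \in \mathfrak{C}^{-1}$ and hence $w \in \mathfrak{C} \cap \mathfrak{C}^{-1}$. Thus the set of involutions of $\mathfrak{C}$ is always contained in $\mathfrak{C} \cap \mathfrak{C}^{-1}$, and it remains only to promote this inclusion to an equality. The strategy is to show that the two sides have the same total cardinality when summed over all left cells; since the inclusion holds cell by cell, matching totals force equality in each cell.

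Next I would compute $|\mathfrak{C} \cap \mathfrak{C}^{-1}|$ representation-theoretically. Taking $\mathfrak{C}' = \mathfrak{C}$ in Lemma~\ref{lemma:12.15} gives $|\mathfrak{C} \cap \mathfrak{C}^{-1}| = \dim \textup{Hom}_W([\mathfrak{C}],[\mathfrak{C}])$. Under the multiplicity-free hypothesis, $[\mathfrak{C}]$ is a direct sum of pairwise non-isomorphic irreducible $W$-modules, so Schur's lemma identifies this dimension with the number of irreducible constituents of $[\mathfrak{C}]$. Summing over all left cells and using that the left cells partition $W$, so that $\bigoplus_\mathfrak{C}[\mathfrak{C}] \cong \textup{Reg}_W$ as $W$-modules after restriction of scalars, I would argue that each irreducible $V$ occurs in exactly $\dim V$ of the cell modules: its multiplicity in $\textup{Reg}_W$ is $\dim V$, while the multiplicity-free hypothesis means each cell contributes $0$ or $1$ to this total. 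Hence $\sum_\mathfrak{C} |\mathfrak{C} \cap \mathfrak{C}^{-1}|$ equals $\sum_V \dim V$, the sum running over the irreducible $W$-modules.

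Finally I would invoke the classical fact that every irreducible representation of a finite Weyl group is realizable over $\mathbb{Q}$, so that all Frobenius--Schur indicators equal $+1$ and $\sum_V \dim V$ counts precisely the elements $w \in W$ with $w^2=1$. Since this number is also $\sum_\mathfrak{C}(\text{number of involutions in } \mathfrak{C})$, the two cell-indexed sums agree; combined with the inclusion from the first paragraph, this forces $\mathfrak{C} \cap \mathfrak{C}^{-1}$ to coincide with the set of involutions in $\mathfrak{C}$ for every cell. The only genuinely delicate points are bookkeeping ones: verifying that $\bigoplus_\mathfrak{C}[\mathfrak{C}]$ really is the regular representation after restriction of scalars, and that the identity element is counted consistently on both sides, as the trivial involution lying in its own cell. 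The substantive input, Lemma~\ref{lemma:12.15}, is already in hand, so no further analysis of the Hecke algebra or of conjectures \textbf{P1--P15} is needed beyond what that lemma assumes.
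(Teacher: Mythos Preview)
Your proof is correct and is precisely the argument of \cite{lusztig:characters}(12.17) that the paper cites: the trivial inclusion, Lemma~\ref{lemma:12.15} with $\mathfrak{C}'=\mathfrak{C}$, the decomposition $\bigoplus_\mathfrak{C}[\mathfrak{C}]\cong\textup{Reg}_W$ (already used inside the proof of Lemma~\ref{lemma:12.15}), and the Frobenius--Schur count of involutions. The only cosmetic point is that the corollary is stated for a general finite Coxeter group $W$, so you should invoke realizability over $\mathbb{R}$ rather than $\mathbb{Q}$; this holds for all finite Coxeter groups and is all that is needed for the Frobenius--Schur indicators to equal $+1$.
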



\section{Type $B_n$}\label{section:typeB}

The goal of this section is to detail the combinatorics of arbitrary rank standard domino tableaux necessary to describe Kazhdan-Lusztig cells in type $B_n$.

\subsection{Domino Tableaux}

Consider a partition $p$ of a natural number $n$.  We will view it as a Young diagram $Y_p$, a left-justified array of squares whose row lengths decrease weakly.  The square in row $i$ and column $j$ of $Y_p$ will be denoted $s_{ij}$ and a pair of squares in $Y_p$ of the form $\{s_{ij},s_{i+1,j}\}$ or $\{s_{ij},s_{i,j+1}\}$ will be called a {\it domino}.  A domino is {\it removable} from $Y_p$ if deleting its underlying squares leaves either another Young diagram containing the square $s_{11}$ or the empty set.

Successive deletions of removable dominos from a Young diagram $Y_p$ must eventually terminate in a staircase partition containing $\binom{r+1}{2}$ squares for some non-negative integer $r$.  This number is determined entirely by the underlying partition $p$ and does not depend on the sequence of deletions of removable dominos.  We will write $p \in \mathcal{P}_r$ and say that $p$ is a {\it partition of rank $r$}.  The {\it core of $p$} is its underlying staircase partition.

\begin{example}  The partition $p=[4,3^2,1]$ lies in the set $\mathcal{P}_2$.   Below are its Young diagram $Y_p$ and a domino tiling resulting from a sequence of deletions of removable dominos exhibiting the underlying staircase partition.
$$
\begin{small}
\begin{tableau}
:.{}.{}.{}.{}\\
:.{}.{}.{}\\
:.{}.{}.{}\\
:.{}\\
\end{tableau}
\end{small}
\hspace{1in}
\begin{small}
\begin{tableau}
:.{}.{}>{}\\
:.{}>{}\\
:^{}>{}\\
:;\\
\end{tableau}
\end{small}
$$
\end{example}

Consider $p \in \mathcal{P}_r$.   It is a partition of the integer $2n+\binom{r+1}{2}$ for some $n$.  A {\it standard domino tableau of rank $r$ and shape $p$} is a tiling of the non-core squares of $Y_p$ by dominos, each of which is labeled by a unique integer from $\{1, \ldots , n\}$ in such a way that the labels increase along its rows and columns.  We will write $SDT_r(p)$ for the set of standard domino tableaux of rank $r$ of shape $p$ and $SDT_r(n)$ for the set of standard domino tableaux of rank $r$ which contain exactly $n$ dominos.

For $T \in SDT_r(n)$, we will say that the square $s_{ij}$ is {\it variable} if $i+j \equiv r \mod 2$ and {\it fixed} otherwise.  As discussed in \cite{garfinkle1} and \cite{pietraho:rscore}, a choice of fixed squares on a tableau $T$ allows us to define two notions, a partition of its dominos into cycles and the operation of moving through a cycle.  The moving through map, when applied to a cycle $c$ in a tableau $T$  yields another standard domino tableau $MT(T,c)$ which differs from $T$ only in the labels of the variable squares of $c$.  If $c$ contains $D(l,T)$, the domino in $T$ with label $l$, then $MT(T,c)$ is in some sense the minimally-affected standard domino tableau in which the label of the variable square in $D(l,T)$ is changed.  We refer the reader to \cite{pietraho:rscore} for the detailed definitions.

If the shape of $MT(T,c)$ is the same as the shape of $T$, we will say that $c$ is a {\it closed cycle}.  Otherwise,  one square will be removed from $T$ (or added to its core) and one will be added.  In this case, we will say the $c$ is {\it open} and denote the aforementioned squares as $s_b(c)$ and $s_f(c),$ respectively.  Finally, if $s_b(c)$ is adjacent to the core of $T$, we will say that $c$ is a {\it core open cycle}.  We will write $OC(T)$ for the set of all open cycles of $T$ and $OC^*(T)$ the subset of non-core open cycles.

\subsection{Generalized Robinson-Schensted Algorithms}

The Weyl group $W_n$ of type $B_n$ consists of the set of signed permutations on $n$ letters, which we write in one-line notation as $w=(w_1 \, w_2 \, \ldots w_n)$.  For each non-negative integer $r$, there is an injective map $$G_r: W_n \rightarrow SDT_r(n) \times SDT_r(n)$$ which is onto the subset of domino tableaux of the same-shape, see \cite{garfinkle1} and \cite{vanleeuwen:rank}. We will write $G_r(x)=(S_r(x),T_r(x))$ for the image of a permutation $x$ and refer to the two components as the {\it left} and {\it right tableaux of $x$}.

\begin{definition}\label{definition:combinatorialcells}
Consider $x, y \in W_n$ and fix a non-negative integer $r$.  We will say that
    \begin{enumerate}
        \item $x \approx^\iota_\mathcal{L} y$ if $T_r(y) = T_r(x)$, and
        \item $x \approx_\mathcal{L} y$ if $T_r(y)= MT(T_r(x),C)$ for some $C\subset OC^*(T_r(x)).$
    \end{enumerate}
\end{definition}

We will call the equivalence classes defined by $\approx^\iota_\mathcal{L}$ {\it irreducible combinatorial left cells of rank $r$} in $W$, and those defined by $\approx_\mathcal{L}$ its {\it reducible combinatorial left cells of rank $r$.}  In the irreducible case, we will say that the combinatorial left cell is {\it represented by the tableau} $T_r(x)$. In the reducible case, we will say that the combinatorial left cell is {\it represented by the set} $\{MT(T_r(x),C) \; | \; C\subset OC^*(T_r(x))\}$  of standard domino tableaux.

\subsection{Cells in type $B_n$}
\label{section:cellsB}

Consider the generators of $W_n$ as in the following diagram:
\begin{center}
\begin{picture}(300,30)
\put( 50 ,10){\circle*{5}} \put( 50, 8){\line(1,0){40}} \put( 50,
12){\line(1,0){40}} \put( 48, 20){$t$} \put( 90 ,10){\circle*{5}}
\put(130 ,10){\circle*{5}} \put(230 ,10){\circle*{5}} \put( 90,
10){\line(1,0){40}} \put(130, 10){\line(1,0){25}} \put(170,
10){\circle*{2}} \put(180, 10){\circle*{2}} \put(190,
10){\circle*{2}} \put(205, 10){\line(1,0){25}} \put( 88 ,20){$s_1$}
\put(128, 20){$s_2$} \put(222, 20){$s_{n-1}$}
\end{picture}
\end{center}
Define the weight function $L$ by $L(t)=b$ and
$L(s_i)=a$ for all $i$ and set $s=\frac{b}{a}$.  The following is a
conjecture of Bonnaf\'e, Geck, Iancu, and Lam, and appears as
Conjectures A, B, and D in \cite{bgil}:

\begin{conjecture}\label{conjecture:bgil} Consider a Weyl group of type $B_n$
with a weight function $L$ and parameter $s$ defined as above.
    \begin{enumerate}
        \item When $s \not\in \mathbb{N}$, the Kazhdan-Lusztig left cells coincide
                with the irreducible combinatorial left cells of rank $\lfloor s \rfloor$.
        \item  When $s \in \mathbb{N}$, the Kazhdan-Lusztig left cells coincide
                with the reducible combinatorial left cells of rank $s-1$.
    \end{enumerate}
\end{conjecture}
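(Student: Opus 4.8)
The plan is to prove both parts of Conjecture \ref{conjecture:bgil} by attaching to each combinatorial left cell a constructible $W_n$-module whose constituents are read off from the shapes of its tableaux, and then matching these modules against the genuine Kazhdan--Lusztig cells via a counting argument resting on Lemma \ref{lemma:12.15}. Since, as recounted in the introduction, Bonnaf\'e's work already settles the case $s\notin\mathbb{N}$ (granting {\bf P1--P15}), and in the case $s\in\mathbb{N}$ guarantees at least that every Kazhdan--Lusztig left cell is a \emph{union} of reducible combinatorial left cells of rank $s-1$, the essential new task is to upgrade this union statement to an equality. Throughout I would assume {\bf P1--P15}, which hold in type $B_n$ for the parameters at hand, so that Lemma \ref{lemma:12.15} and Corollary \ref{corollary:involutions} are available.

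The combinatorial heart of the argument is a description, purely in terms of rank $r$ domino tableaux, of the two operations from which constructible representations are built: truncated parabolic induction and tensoring with the sign character. First I would attach to each combinatorial left cell $\mathfrak{D}$ the module $[\mathfrak{D}]=\bigoplus_{p}E_p$, the sum running over the shapes $p$ of the tableaux in the representing set of $\mathfrak{D}$ and $E_p$ the irreducible indexed by $p$. Using $\dim E_p=|SDT_r(p)|$ together with the fact that within a reducible cell each attainable shape is realized by exactly one tableau, one checks that $\dim[\mathfrak{D}]=|\mathfrak{D}|$ and that $\bigoplus_{\mathfrak{D}}[\mathfrak{D}]\cong\textup{Reg}_{W_n}$. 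The main step is then to show, mimicking McGovern's analysis of the equal parameter case, that as $\mathfrak{D}$ ranges over the combinatorial left cells of the relevant rank the modules $[\mathfrak{D}]$ are precisely the constructible representations of $W_n$, that each is multiplicity-free, and that distinct cells produce distinct shape-sets. Concretely this means translating truncated induction from a parabolic $W_I$ and tensoring with sign into operations on the shapes of rank $r$ tableaux; in the case $s\in\mathbb{N}$ one must track how these operations interact with the open cycles of a tableau and with the moving-through map $MT$.

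With this dictionary in place the conclusion would follow by matching. By Geck's characterization each Kazhdan--Lusztig left cell module $[\mathfrak{C}]$ is constructible, hence multiplicity-free and isomorphic to some $[\mathfrak{D}]$. Bonnaf\'e's result provides a surjection $\pi$ from combinatorial onto Kazhdan--Lusztig left cells, so $\mathfrak{C}=\bigsqcup_{\mathfrak{D}'\in\pi^{-1}(\mathfrak{C})}\mathfrak{D}'$. Applying Corollary \ref{corollary:involutions} to $\mathfrak{C}$ and to each $\mathfrak{D}'$ identifies the number of distinct constituents of $[\mathfrak{C}]$ with the number of involutions in $\mathfrak{C}$, and likewise for the $\mathfrak{D}'$; comparing these via $|\mathfrak{C}\cap\mathfrak{C}^{-1}|$ forces the shape-sets of the cells in a fiber to be pairwise disjoint, whence $[\mathfrak{C}]\cong\bigoplus_{\mathfrak{D}'}[\mathfrak{D}']$ as $W_n$-modules. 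A counting of how often each constructible representation occurs in $\textup{Reg}_{W_n}$, performed identically on the Kazhdan--Lusztig and the combinatorial sides, then shows that no constructible representation is a nontrivial direct sum of others of this form, so each fiber of $\pi$ is a singleton. This gives the equality of the two cell decompositions and, simultaneously, the asserted module structure: the constituents of $[\mathfrak{C}]$ are exactly the $E_p$ for the shapes $p$ of the rank $r$ tableaux of the elements of $\mathfrak{C}$.

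The main obstacle I anticipate lies in the case $s\in\mathbb{N}$. The combinatorial dictionary must show that tensoring with sign and truncated induction are compatible with passage through non-core open cycles, so that it is the reducible combinatorial cells, rather than the finer irreducible ones, that are singled out by the constructible representations; this is where the moving-through map enters in an essential way. Equally delicate is the rigidity needed to convert Bonnaf\'e's union statement into an equality: establishing multiplicity-freeness of every $[\mathfrak{D}]$ and the linear independence of the resulting constructible representations, which is precisely what makes the final counting argument force singleton fibers. Everything else is either quoted (Geck, Bonnaf\'e, {\bf P1--P15}, Lemma \ref{lemma:12.15}) or a dimension count.
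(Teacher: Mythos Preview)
Your outline tracks the paper closely: quote Bonnaf\'e for the union statement, build the combinatorial dictionary for truncated induction and tensoring with sign on rank-$r$ shapes, invoke Geck to identify cell modules with constructibles, and use Lemma~\ref{lemma:12.15} and Corollary~\ref{corollary:involutions} to squeeze the fibers of $\pi$. The dictionary you anticipate is exactly Corollaries~\ref{corollary:tensoringwithsign} and~\ref{corollary:truncated} together with Lemma~\ref{lemma:inductionshapes}, and the disjoint-shapes step is Lemma~\ref{lemma:disjointshapes}.

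There is, however, a genuine gap in your matching step. From disjointness of the shape-sets of the $\mathfrak{D}'$ in a fiber and the involution count you deduce only that $[\mathfrak{C}]$ and $\bigoplus_{\mathfrak{D}'}[\mathfrak{D}']$ are both multiplicity-free, of equal dimension, and with the same \emph{number} of irreducible constituents; that does not yield $[\mathfrak{C}]\cong\bigoplus_{\mathfrak{D}'}[\mathfrak{D}']$, since nothing yet pins down \emph{which} irreducibles occur in $[\mathfrak{C}]$. The paper closes this gap by reversing the order of the argument. Rather than first classifying constructibles as $[\mathfrak{D}]$'s and then trying to match, it proves directly (Theorem~\ref{theorem:main}) that $[\mathfrak{C}]\cong[\mathbb{T}]$, where $\mathbb{T}$ is the set of tableaux actually representing the Kazhdan--Lusztig cell $\mathfrak{C}$. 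The proof runs the induction on constructible operations \emph{on the Kazhdan--Lusztig cells themselves}: Lemmas~\ref{lemma:longword} and~\ref{lemma:paraboliccells} produce an explicit element of $\mathfrak{C}w_0$ or of the induced cell, one reads off its right tableau, and Lemma~\ref{lemma:inductionshapes} checks that the resulting representing set has exactly the shape-set predicted by Corollary~\ref{corollary:truncated}. Only once $[\mathfrak{C}]\cong[\mathbb{T}]$ is established does the paper invoke the classification of constructibles (from~\cite{pietraho:constructible}) to get $[\mathbb{T}]=[\widetilde{\mathbb{T}}]$ for a \emph{single} combinatorial shape-set $\widetilde{\mathbb{T}}$.

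Your proposed endgame, a regular-representation count resting on linear independence of the constructibles, is also not what the paper does and would need its own proof: distinct shape-sets need not yield linearly independent characters. The paper's rigidity argument is instead purely combinatorial. Once $\coprod_i S(\mathbb{T}_i)=S(\widetilde{\mathbb{T}})$ with the left side a disjoint union, one observes that the number of beginning and ending squares of non-core open cycles is an invariant of the shape alone, hence constant across $S(\widetilde{\mathbb{T}})$; thus each $|S(\mathbb{T}_i)|$ already equals $|S(\widetilde{\mathbb{T}})|$, and disjointness forces $|I|=1$. This replaces your linear-independence hypothesis with a concrete fact about open cycles.
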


This conjecture is well-known to be true for $s=1$ by work of
Garfinkle \cite{garfinkle3}, and has been verified when $s>n-1$ by
Bonnaf\'e and Iancu \cite{bonnafe:iancu}. It has also been shown to hold for all
values of $s$ when $n \leq 6$, see \cite{bgil}. Furthermore, assuming {\bf P1-P15}, C.~Bonnaf\'e has shown the conjecture to be true in the irreducible case, and that in the reducible case, Kazhdan-Lusztig left cells are unions of the reducible combinatorial left cells \cite{bonnafe:knuth}.

\section{Constructible Representations in Type $B_n$}\label{section:constructible}
M.~Geck has shown that if Lusztig's conjectures {\bf P1-P15} hold, then the $W$-modules carried by the Kazhdan-Lusztig left cells of an unequal parameter Hecke algebra are precisely the constructible ones \cite{geck:constructible}. Defined in the unequal parameter setting by Lusztig in \cite{lusztig:unequal}(20.15),  constructible modules arise via truncated induction and tensoring with the sign representation.  The goal of this section is to give a combinatorial description of the effects of these two operations on $W$-modules in type $B_n.$  Our approach is based on the equal-parameter results of \cite{mcgovern:leftcells}.

\subsection{Irreducible $W_n$-modules}\label{section:irreducible}

Let us restrict our attention to type $B_n$, write $W_n$ for the corresponding Weyl group, and define constants $a$, $b$, and $s,$ as in Section \ref{section:cellsB}.   We begin by recalling the standard parametrization of irreducible $W_n$-modules.  Let $\mathcal{P}^2$ be the set of ordered pairs of partitions and $\mathcal{P}^2(n)$ be the subset of $\mathcal{P}^2$ where the combined sum of the parts of both partitions is $n$.
\begin{theorem} \label{theorem:irrW}
The set of irreducible representations of $W_n$ is parametrized by $\mathcal{P}^2(n)$.
If we write $[(d,f)]$ for the representation corresponding to $(d,f) \in \mathcal{P}^2(n)$, then $$[(f^t,d^t)] \cong [(d,f)] \otimes \textup{sgn},$$ where $p^t$ denotes the transpose of the partition $p$.
\end{theorem}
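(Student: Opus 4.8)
The plan is to realize $W_n$ as the wreath product $(\mathbb{Z}/2)\wr S_n=(\mathbb{Z}/2)^n\rtimes S_n$, with the reflection $t$ generating a sign change in the first coordinate and $s_1,\dots,s_{n-1}$ generating the permutation factor $S_n$, and then to deduce both assertions from the representation theory of wreath products together with the corresponding statements for $S_n$. The first assertion is classical and I would simply recall it: the base group $(\mathbb{Z}/2)^n$ is abelian with characters indexed by subsets $A\subseteq\{1,\dots,n\}$, where $A$ records the factors on which the nontrivial character $\chi_-$ of $\mathbb{Z}/2$ acts; the $S_n$-orbits of these are indexed by $q=|A|$, and the stabilizer of the representative with $A=\{p+1,\dots,n\}$, $p=n-q$, is the Young subgroup $W_p\times W_q$. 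Clifford theory (the method of little groups) then produces every irreducible exactly once as
$$[(d,f)]=\textup{Ind}_{W_p\times W_q}^{W_n}\big(V_d\boxtimes V_f\big),\qquad V_d=\chi_+^{\otimes p}\otimes S^d,\quad V_f=\chi_-^{\otimes q}\otimes S^f,$$
where $S^d$ and $S^f$ are the Specht modules for $S_p$ and $S_q$ pulled back along the projections $W_p\twoheadrightarrow S_p$ and $W_q\twoheadrightarrow S_q$, and $|d|=p$, $|f|=q$. This gives the parametrization by $\mathcal{P}^2(n)$.

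For the tensoring formula I would argue directly from this construction. The key inputs are the projection formula $\textup{Ind}_H^G(V)\otimes U\cong\textup{Ind}_H^G\big(V\otimes\textup{Res}_H^G U\big)$, the symmetric-group identity $S^\lambda\otimes\textup{sgn}\cong S^{\lambda^t}$, and the restriction of the reflection sign of $W_n$. Since $t$ and each $s_i$ are reflections, $\textup{sgn}$ takes the value $-1$ on every generator, so its restriction to $W_p\times W_q$ is $\textup{sgn}_{W_p}\boxtimes\textup{sgn}_{W_q}$, and on each factor $\textup{sgn}_{W_p}=\chi_-^{\otimes p}\otimes\textup{sgn}_{S_p}$. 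Applying the projection formula and computing on each tensor factor,
$$V_d\otimes\textup{sgn}_{W_p}=\chi_-^{\otimes p}\otimes S^{d^t},\qquad V_f\otimes\textup{sgn}_{W_q}=\chi_+^{\otimes q}\otimes S^{f^t},$$
where the second equality uses $\chi_-\otimes\chi_-=\chi_+$. Hence $[(d,f)]\otimes\textup{sgn}$ is induced from a module whose $\chi_+$-block has size $q$ and carries $S^{f^t}$ and whose $\chi_-$-block has size $p$ and carries $S^{d^t}$. Since $W_p\times W_q$ and $W_q\times W_p$ are conjugate in $W_n$ by a permutation interchanging the two blocks, I may reorder the factors and read off $\textup{Ind}_{W_q\times W_p}^{W_n}\big((\chi_+^{\otimes q}\otimes S^{f^t})\boxtimes(\chi_-^{\otimes p}\otimes S^{d^t})\big)=[(f^t,d^t)]$, as claimed.

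The only real obstacle is bookkeeping: fixing the convention that attaches $\chi_+$ to the first partition and $\chi_-$ to the second, and checking that the reflection sign restricts to $\chi_-^{\otimes n}$ on the base group rather than to some other linear character of $W_n$. A conceptually cleaner alternative, which I would mention, routes everything through the Frobenius characteristic for wreath products: $[(d,f)]$ corresponds to a product $s_d\cdot s_f$ of Schur functions in two sets of variables labeled by $\chi_+$ and $\chi_-$, and tensoring with $\textup{sgn}$ becomes the classical involution $\omega$ (which sends $s_\lambda\mapsto s_{\lambda^t}$) combined with the interchange of the two variable sets forced by $\textup{sgn}|_{(\mathbb{Z}/2)^n}=\chi_-^{\otimes n}$; this yields $s_d\cdot s_f\mapsto s_{f^t}\cdot s_{d^t}$ in one stroke.
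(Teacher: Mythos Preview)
Your argument is correct and is precisely the standard one. The paper, however, does not prove this theorem at all: it is stated as a classical fact being recalled (``We begin by recalling the standard parametrization of irreducible $W_n$-modules''), with no proof environment following. So there is nothing in the paper to compare against beyond noting that the result is treated as well known.

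Your Clifford-theory construction via the wreath product $(\mathbb{Z}/2)\wr S_n$ is the textbook route to this parametrization, and your computation of $[(d,f)]\otimes\textup{sgn}$ via the projection formula, the restriction $\textup{sgn}|_{(\mathbb{Z}/2)^n}=\chi_-^{\otimes n}$, and the symmetric-group identity $S^\lambda\otimes\textup{sgn}\cong S^{\lambda^t}$ is clean and accurate. One small point of phrasing: when you write ``the stabilizer of the representative \ldots\ is the Young subgroup $W_p\times W_q$'', strictly speaking the stabilizer in $S_n$ is $S_p\times S_q$, and $W_p\times W_q$ is the inertia group in $W_n$; your subsequent induction is from the inertia group, which is what Clifford theory requires, so the argument is fine once this is read correctly.
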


In this form, the connection between irreducible $W_n$-modules and the description of left cells in Conjecture \ref{conjecture:bgil} is not clear.  To remedy this, we would like to restate Theorem \ref{theorem:irrW} in terms of partitions of arbitrary rank which arise as shapes of the standard domino tableaux in this conjecture.  Thus let $r= \lfloor s \rfloor$ if $s \not\in \mathbb{N}$, $r= s-1$ otherwise, and write  $\epsilon=s - \lfloor s \rfloor$.
As an intermediary to this goal, we define the notion of a {\it symbol of defect $t$ and residue $\epsilon$} for a non-negative integer $t$ and $0\leq \epsilon < 1$.  It is an array of non-negative numbers of the form
$$
\Lambda= \left(
\begin{array}{cccccc}
  \lambda_1 + \epsilon & \lambda_2 + \epsilon &  &\ldots  & & \lambda_{N+t} + \epsilon \\
  {} & \mu_1 & \mu_2 & \ldots  & \mu_N & {}
\end{array}
\right)
$$
where the (possibly empty) sequences $\{\lambda_i\}$ and $\{\mu_i\}$ consist of integers and are strictly increasing. If we define a related symbol by letting
$$
\Lambda'= \left(
\begin{array}{ccccccc}
  \epsilon & \lambda_1+1+ \epsilon  & \lambda_2+2+ \epsilon &\ldots  & & \lambda_{N+t} +N+t+ \epsilon\\
   {} & 0 & \mu_1+1 & \ldots  & \mu_N+N & {}
\end{array}
\right)
$$
then the binary relation defined by setting $\Lambda \sim \Lambda'$ generates an equivalence relation.  We will write $Sym^\epsilon_t$ for the set of its equivalence classes.

We describe two maps between symbols and partitions.  A partition can be used to construct a symbol in the following way.
If $p = (p_1,p_2, \ldots, p_k)$, form  $p^\sharp=(p_1, p_2,
\ldots, p_{k'})$ by adding an additional zero term to $p$ if the
rank of $p$ has the same parity as $k$.  Dividing the set
$\{p_i+k'-i\}_{i=1}^{k'}$ into its odd and even parts yields two sequences
$$\{2\mu_i+1\}_{i=1}^N\text{ and }\{2\lambda_i\}_{i=1}^{N+t}$$ for some non-negative integer $t$.  A
symbol $\Lambda_p$ of defect $t$ and residue $\epsilon$ corresponding to $p$ can now be defined by
arranging the integers $\lambda_i$ and $\mu_i$ into an array as above.

Given a symbol of defect
$t$ and residue $\epsilon$, it is also possible to construct an ordered pair of partitions.
With $\Lambda$ as above, let $$d_\Lambda = \{\lambda_i - i
+1\}_{i=1}^{N+t}\text{ and }f_\Lambda =\{\mu_i - i +1\}_{i=1}^N.$$

Both constructions are well-behaved with respect to the equivalence on symbols defined above.
The next theorem follows from \cite{james:kerber}(2.7).

\begin{theorem}
The maps $p \mapsto \Lambda_p$ and  $\Lambda \mapsto (d_{\Lambda},
f_{\Lambda})$ define  bijections
$$\mathcal{P}_r \rightarrow Sym^\epsilon_{r+1} \rightarrow
\mathcal{P}^2$$ for all values of $r$ and $\epsilon$.  Consequently, their composition
yields a bijection between $\mathcal{P}_r(n)$ and
$\mathcal{P}^2(n)$. \label{theorem:bijections}
\end{theorem}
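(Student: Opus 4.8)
The plan is to establish the theorem in two stages, one for each of the two maps, and then compose. I would first verify that $p \mapsto \Lambda_p$ defines a bijection $\mathcal{P}_r \to Sym^\epsilon_{r+1}$, and then that $\Lambda \mapsto (d_\Lambda, f_\Lambda)$ defines a bijection $Sym^\epsilon_{r+1} \to \mathcal{P}^2$. Since $\epsilon$ is a fixed additive constant attached uniformly to the top row of every symbol, it plays no role in the combinatorics of either map; I would note this at the outset so that the residue can be suppressed and the argument reduced to the classical $\epsilon = 0$ situation, which is precisely what the cited result \cite{james:kerber}(2.7) governs. The whole point is that the constructions here are a repackaging of the standard beta-number/abacus correspondence between partitions and pairs of partitions via symbols, so the substantive content is bookkeeping rather than new mathematics.

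For the first map, the key step is to check that the parity adjustment defining $p^\sharp$ together with the odd/even splitting of the beta-set $\{p_i + k' - i\}$ is well-defined and inverts correctly. First I would confirm that the rank $r$ of $p$ determines, and is determined by, the defect: the staircase core of size $\binom{r+1}{2}$ forces the two extracted sequences to have lengths differing by exactly $t = r+1$, which is what places $\Lambda_p$ in $Sym^\epsilon_{r+1}$. I would then check that the equivalence $\Lambda \sim \Lambda'$ on symbols exactly matches the ambiguity in $p^\sharp$ coming from the optional trailing zero (shifting all beta-numbers up by one and inserting the minimal new entries in each row), so that the map descends to a genuine bijection on equivalence classes. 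For the second map, I would verify that $d_\Lambda = \{\lambda_i - i + 1\}$ and $f_\Lambda = \{\mu_i - i + 1\}$ recover weakly decreasing partitions precisely because the $\lambda_i$ and $\mu_i$ are strictly increasing, that this is inverse to assembling a symbol from a pair of partitions, and that it too respects the equivalence relation; this is the standard correspondence between a single symbol and a bipartition.

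The main obstacle, and the step I would spend the most care on, is the compatibility of both constructions with the equivalence relation generated by $\Lambda \sim \Lambda'$: I must show that varying the representative of a class in $Sym^\epsilon_{r+1}$ changes neither $p$ (on the one side) nor $(d_\Lambda, f_\Lambda)$ (on the other), so that the maps are genuinely defined on classes and remain mutually inverse. Once well-definedness and the inverse relations are in place, bijectivity is immediate. The final sentence of the theorem, that the composition restricts to a bijection $\mathcal{P}_r(n) \to \mathcal{P}^2(n)$, then follows by tracking the single numerical invariant through the correspondence: a partition $p \in \mathcal{P}_r$ of $2n + \binom{r+1}{2}$ corresponds, after removing the core, to $n$ dominos, and under the symbol correspondence this total is carried to the combined size $|d_\Lambda| + |f_\Lambda|$ of the resulting bipartition. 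I would close by recording that this size-matching identity is exactly the content needed to intersect the bijection with the subsets indexed by $n$, completing the proof.
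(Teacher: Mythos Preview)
Your proposal is correct and follows the standard beta-number/abacus argument; the paper itself gives no proof at all, simply recording that the theorem ``follows from \cite{james:kerber}(2.7),'' which is exactly the classical correspondence you have unpacked. In that sense you have supplied the details the paper chose to omit rather than taken a different route.
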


This result allows us to custom tailor a parametrization of irreducible $W_n$-modules to each value of the parameter $s$ by defining $r$ and $\epsilon$ as above.  Together with Lusztig's Lemma 22.18 of \cite{lusztig:unequal}, the present theorem implies the following alternate parametrization of the representations of $W_n$ in terms of symbols.  A parametrization in terms of partitions of rank $r$ follows.
\begin{corollary}
If we fix  values of the defect $r$ and residue $\epsilon$, then
the set of irreducible representations of $W_n$ is parametrized by the set of equivalence classes of symbols
$\{ \Lambda \in Sym^\epsilon_{r+1} \; | \; \text{ parts of $d_\Lambda$ and $f_\Lambda$ sum to $n$}\}.$
Writing $[\Lambda]$ for the representation corresponding to $\Lambda$, we have
$$[\bar{\Lambda}]=[\Lambda] \otimes \textup{sgn}$$
where the symbol $\bar{\Lambda}$ is defined from $\Lambda$ by the following procedure.  Write $\Lambda$ as above and let $\tau$ be the integer part its largest entry.  Then the integer parts of the top and bottom rows of $\bar{\Lambda}$ consist of the complements of $\{\tau-\mu_i\}_i$ and $\{\tau-\lambda_i\}_i$ in $[0,\tau] \cap \mathbb{Z}$, respectively.
\end{corollary}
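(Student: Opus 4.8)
The statement bundles two assertions: that the indicated symbols parametrize the irreducible $W_n$-modules, and that tensoring with sign is realized on symbols by the stated complementation recipe. The plan is to settle the parametrization by composing bijections already in hand, and to reduce the sign formula to a classical beta-set identity. For the parametrization, Theorem~\ref{theorem:bijections} gives that $\Lambda\mapsto(d_\Lambda,f_\Lambda)$ restricts to a bijection from $\{\Lambda\in Sym^\epsilon_{r+1}\mid \text{parts of }d_\Lambda,f_\Lambda\text{ sum to }n\}$ onto $\mathcal{P}^2(n)$, which by Theorem~\ref{theorem:irrW} labels the irreducible $W_n$-modules; composing and setting $[\Lambda]=[(d_\Lambda,f_\Lambda)]$ yields the labeling. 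Lusztig's Lemma~22.18 of \cite{lusztig:unequal} enters here to match this symbol labeling, with its prescribed defect $r+1$ and residue $\epsilon$, to the one native to his framework.

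For the sign formula I would first reduce to combinatorics. By Theorem~\ref{theorem:irrW},
\[
[\Lambda]\otimes\textup{sgn} = [(d_\Lambda,f_\Lambda)]\otimes\textup{sgn} = [((f_\Lambda)^t,(d_\Lambda)^t)],
\]
so it suffices to prove that the symbol $\bar\Lambda$ produced by the complementation recipe satisfies $d_{\bar\Lambda}=(f_\Lambda)^t$ and $f_{\bar\Lambda}=(d_\Lambda)^t$; then $[\bar\Lambda]=[((f_\Lambda)^t,(d_\Lambda)^t)]=[\Lambda]\otimes\textup{sgn}$ directly, with no appeal to injectivity beyond the definition of $[\bar\Lambda]$. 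Since transposition preserves size, the target bipartition again has total size $n$, and a count of entries against the common bound shows $\bar\Lambda$ retains defect $r+1$ and residue $\epsilon$; hence $\bar\Lambda$ lies in the relevant subset of $Sym^\epsilon_{r+1}$.

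The heart of the argument is to read the rows of a symbol as beta-sets. Because $\{\lambda_i\}$ is strictly increasing, these entries are precisely the beta-numbers of the partition $d_\Lambda=\{\lambda_i-i+1\}$ taken with bound $N+t$, and likewise $\{\mu_i\}$ is a beta-set for $f_\Lambda$. I would then invoke the classical fact (as in \cite{james:kerber}) that if $B\subseteq\{0,1,\ldots,\tau\}$ is a beta-set for a partition $q$, then the complement $\{0,\ldots,\tau\}\setminus\{\tau-\beta\mid\beta\in B\}$ is a beta-set for the conjugate $q^t$. Applying this with $B=\{\lambda_i\}$ identifies the complement of $\{\tau-\lambda_i\}$ --- by construction the integer parts of the bottom row of $\bar\Lambda$ --- as a beta-set for $(d_\Lambda)^t$, giving $f_{\bar\Lambda}=(d_\Lambda)^t$; applying it with $B=\{\mu_i\}$ gives $d_{\bar\Lambda}=(f_\Lambda)^t$. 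The choice of $\tau$ as the integer part of the largest entry of $\Lambda$ dominates every $\lambda_i$ and $\mu_i$, so both complements are taken inside a set containing the respective beta-sets.

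I expect the main obstacle to be the bookkeeping around this complementation rather than any single deep step: one must check that a single bound $\tau$ serves both rows simultaneously, that the complementation lemma yields genuine conjugation, and that the outcome is independent of the chosen $\tau$. This last point follows from the symbol equivalence $\Lambda\sim\Lambda'$, under which enlarging $\tau$ leaves $(d_{\bar\Lambda},f_{\bar\Lambda})$ unchanged --- precisely the well-definedness already recorded in Theorem~\ref{theorem:bijections}. Granting these verifications, the identities $d_{\bar\Lambda}=(f_\Lambda)^t$ and $f_{\bar\Lambda}=(d_\Lambda)^t$ close the argument and establish $[\bar\Lambda]=[\Lambda]\otimes\textup{sgn}$.
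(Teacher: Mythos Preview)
Your proposal is correct. The paper itself gives no detailed proof of this corollary: the lead-in sentence simply asserts that Theorem~\ref{theorem:bijections} together with Lusztig's Lemma~22.18 of \cite{lusztig:unequal} implies the result, and no proof environment follows. Your argument fills in what the paper leaves to citation. For the parametrization you invoke the same ingredients (Theorem~\ref{theorem:bijections} and Theorem~\ref{theorem:irrW}, with Lusztig~22.18 mentioned for compatibility with his framework). For the sign formula you take a slightly more self-contained route than the paper's bare citation: rather than relying on Lusztig~22.18 for the effect of $\otimes\,\textup{sgn}$ on symbols, you reduce via Theorem~\ref{theorem:irrW} to the identity $(d_{\bar\Lambda},f_{\bar\Lambda})=((f_\Lambda)^t,(d_\Lambda)^t)$ and verify it by reading the symbol rows as beta-sets and applying the classical reflect-and-complement description of conjugate partitions from \cite{james:kerber}. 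This is a legitimate elementary alternative, with the virtue of making the combinatorics explicit; the only cost is the bookkeeping you already flag (a single $\tau$ bounding both rows, preservation of defect and residue, and stability under the symbol equivalence), all of which are routine checks.
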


\begin{corollary} \label{corollary:tensoringwithsign}
If we fix a non-negative integer $r$, then the set of irreducible representations of $W_n$ is parametrized by $\mathcal{P}_r(n)$.
Writing  $[p]$ for the representation corresponding to $p \in \mathcal{P}_r(n)$, we have $$[p^t] \cong [p] \otimes \textup{sgn},$$ where $p^t$ is the transpose of the partition $p$.
\end{corollary}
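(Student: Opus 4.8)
The plan is to reduce the statement to the sign-tensoring rule of the preceding corollary, and then to a single combinatorial identity on symbols which follows from the classical behavior of first-column hook lengths under transposition. Before anything else I would check that the assertion is well-posed, namely that $p^t \in \mathcal{P}_r(n)$ whenever $p \in \mathcal{P}_r(n)$. Transposition preserves the number of boxes, so $p^t$ is again a partition of $2n+\binom{r+1}{2}$. Moreover the deletion of a removable domino is intertwined by transposition: a removable horizontal domino of $Y_p$ becomes a removable vertical domino of $Y_{p^t}$, and deleting it leaves the transpose of the corresponding reduced diagram. Since the staircase core $\delta_r=(r,r-1,\ldots,1)$ is self-conjugate, the deletion process for $p^t$ terminates in the same staircase, so $p^t$ has rank $r$ and $[p^t]$ is defined.

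Next I would record the identification underlying the parametrization. By Theorem \ref{theorem:bijections}, the class $[p]$ is by definition the image of $\Lambda_p \in Sym^\epsilon_{r+1}$ under the symbol parametrization of the preceding corollary; equivalently $[p]=[(d_{\Lambda_p},f_{\Lambda_p})]$ via Theorem \ref{theorem:irrW}. The preceding corollary gives $[\bar{\Lambda_p}]=[\Lambda_p]\otimes\textup{sgn}$, while $[p^t]=[\Lambda_{p^t}]$. Consequently the corollary is equivalent to the single combinatorial assertion
$$\Lambda_{p^t} = \bar{\Lambda_p} \quad \textup{in } Sym^\epsilon_{r+1},$$
and it is this identity that I would prove.

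To establish it I would track beta-numbers. Recall that $\Lambda_p$ is assembled from the first-column hook lengths $\{p_i+k'-i\}$ of $p^\sharp$, split into its even entries $2\lambda_i$ and its odd entries $2\mu_i+1$. The classical description of transposition in terms of beta-sets (see \cite{james:kerber}) is that the beta-set of $p^t$ is obtained from that of $p$ by taking the complement inside $\{0,1,\ldots,M-1\}$, with $M=p_1+k'$, and then reflecting through $x\mapsto M-1-x$. I would then verify that this reflected-complement operation, read off on the even and odd halves separately, does exactly three things: it interchanges the sequences $\{\lambda_i\}$ and $\{\mu_i\}$, it reflects each entry through $\tau-(\cdot)$ and complements the result in $[0,\tau]\cap\mathbb{Z}$, and it preserves both the defect $r+1$ and the residue $\epsilon$. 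Matching $\tau$, the integer part of the largest entry of $\Lambda_p$, with $M$ up to the factor of two, this is precisely the recipe defining $\bar{\Lambda_p}$, which yields the displayed identity.

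The main obstacle is the bookkeeping in this last step. One must line up the padding $p\mapsto p^\sharp$ and the chosen number of parts $k'$ with the length of the beta-set of $p^t$, check that reflection through $M-1$ interchanges the even and odd residue classes in the required manner, and confirm that complementing within $[0,\tau]$ leaves the defect and residue unchanged, so that the resulting array genuinely lies in $Sym^\epsilon_{r+1}$ and equals $\bar{\Lambda_p}$ rather than merely a symbol equivalent to it. None of this is deep, but the parity and padding conventions must be handled with care, since an off-by-one error there is exactly what would spoil the matching of even with odd and hence the interchange of $d$ and $f$.
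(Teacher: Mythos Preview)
Your proposal is correct and follows the paper's own (implicit) route: the paper states this corollary without proof, since it is meant to follow immediately from Theorem~\ref{theorem:bijections} together with the preceding symbol corollary, and the only content to check is precisely your identity $\Lambda_{p^t}=\bar{\Lambda_p}$ in $Sym^\epsilon_{r+1}$. Your reduction to that identity and the beta-number verification are the standard way to fill in this omitted step; the bookkeeping caveats you flag (padding $p\mapsto p^\sharp$, matching parities, and working up to the equivalence on symbols rather than literal equality of arrays) are exactly the points that must be handled, but none of them hides a genuine obstruction.
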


\begin{example} Let $s=3 \frac{1}{2}$, so that $r=3$ and $\epsilon = \frac{1}{2}$, and  consider the irreducible representation $[((1^3), (1))]$ of $W_4$.  Then according to the above parametrizations,
$[((1^3), (1))]=[(4,3,2^2)]=[\Lambda_{(4,3,2^2)}]$
where
$$
\Lambda_{[(4,3,2^2)]}= \left(
\begin{array}{cccccc}
  \frac{1}{2}  & 2 \frac{1}{2} &  3 \frac{1}{2} & 4 \frac{1}{2}\\
  {} & {} & \hspace{-.3in}1 & {}
\end{array}
\right)
$$
is a symbol of defect 3 and residue $\frac{1}{2}$.  Note that $((1^3), (1)) \in \mathcal{P}^2(4)$, $(4,3,2^2) \in \mathcal{P}_2(4)$, and $\Lambda_{(4,3,2^2)}$ is a representative of a class in $Sym^{\epsilon}_3$ for $\epsilon = 1/2$.  Furthermore,
$[((1^3), (1))] \otimes \textup{sgn}  = [((1),(3))] =  [(4,3,2^2)] \otimes \textup{sgn} = [(4^2,2,1)]  =
 [\Lambda_{(4,3,2^2)}]\otimes \textup{sgn} = [\Lambda_{(4^2,2,1)}],$ where
$$
\Lambda_{[(4^2,2,1)]}= \left(
\begin{array}{cccccc}
  \frac{1}{2}  & 1 \frac{1}{2} &  2 \frac{1}{2} & 4 \frac{1}{2}\\
  {} & {} & \hspace{-.3in}3 & {}
\end{array}
\right).
$$
\end{example}

We will need the following lemma, which holds for finite $W$ whenever {\bf P1-P15} hold.  It is a combination of \cite{lusztig:unequal}(11.7) and \cite{lusztig:unequal}(21.5).

\begin{lemma} \label{lemma:longword} Consider a Kazhdan-Lusztig left cell $\mathfrak{C} \subset W$ and let $w_0$ be the longest element of $W$.  Then $\mathfrak{C}w_0$ is also a left cell in $W$, and $[\mathfrak{C}w_0] \cong \mathfrak{C} \otimes \textup{sgn}$ as $W$-modules.
\end{lemma}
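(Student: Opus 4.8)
The plan is to separate the statement into a combinatorial claim, that $\mathfrak{C}w_0$ is again a left cell, and a module-theoretic claim, that the cell module is twisted by the sign character; both are driven by the length reversal $\ell(sx)>\ell(x)\iff\ell(sxw_0)<\ell(xw_0)$ that holds for every $s\in S$. For the combinatorial claim I would show that right multiplication by $w_0$ reverses the preorder $\leq_{\mathcal{L}}$. Since $y\leq_{\mathcal{L}}x$ is generated by the appearance of $C_y$ in a product $C_sC_x$, and the multiplication rule for $C_sC_x$ depends only on whether $sx>x$ or $sx<x$, the length reversal interchanges the two governing cases when $x$ is replaced by $xw_0$. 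The resulting equivalence $y\leq_{\mathcal{L}}x\iff xw_0\leq_{\mathcal{L}}yw_0$ is \cite{lusztig:unequal}(11.7); it immediately gives $x\sim_{\mathcal{L}}y\iff xw_0\sim_{\mathcal{L}}yw_0$, so $\mathfrak{C}\mapsto\mathfrak{C}w_0$ permutes the left cells.

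For the module claim the naive identification $e_x\mapsto e_{xw_0}$ does \emph{not} work: when $sx<x$ the element $C_s$ acts on $e_x$ by a scalar, whereas on $e_{xw_0}$, for which $s(xw_0)>xw_0$, it acts with off-diagonal terms, so no relabeling of basis vectors can intertwine the two $W$-actions directly. Instead I would pass through the sign-twist automorphism $\flat$ of $\mathcal{H}$, the $\mathcal{A}$-algebra automorphism realizing tensoring with the sign character; it specializes at $v=1$ to $w\mapsto(-1)^{\ell(w)}w$ on $\mathbb{C}[W]$, so that for any $\mathcal{H}$-module $M$ the twisted module $M^\flat$ restricts to $M|_{v=1}\otimes\textup{sgn}$. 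The automorphism $\flat$ carries the Kazhdan-Lusztig basis to a second distinguished basis, and the left cells attached to this second basis correspond to the translates $\mathfrak{C}w_0$ of the original cells. Matching the cell-module construction for the two bases then identifies $[\mathfrak{C}]^\flat$ with $[\mathfrak{C}w_0]$; restricting scalars gives $[\mathfrak{C}w_0]\cong[\mathfrak{C}]\otimes\textup{sgn}$, and this comparison is \cite{lusztig:unequal}(21.5).

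The main obstacle is the passage from $C_x$ to $C_{xw_0}$, equivalently the control of $\flat$ on the Kazhdan-Lusztig basis. In the equal parameter case this rests on the positivity of Kazhdan-Lusztig coefficients, which fails here; its role is taken over by the full family {\bf P1--P15}, which pins down the behaviour of the $\aaa$-function and the constants $\gamma_{xyz}$ well enough that the second basis and its cells behave as in the classical setting. Once {\bf P1--P15} supply this comparison, both \cite{lusztig:unequal}(11.7) and (21.5) are available and the two claims combine formally into the stated isomorphism $[\mathfrak{C}w_0]\cong[\mathfrak{C}]\otimes\textup{sgn}$.
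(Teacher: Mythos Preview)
Your proposal is correct and matches the paper's own treatment exactly: the paper does not give an independent proof but simply records that the lemma ``is a combination of \cite{lusztig:unequal}(11.7) and \cite{lusztig:unequal}(21.5)'' under the standing assumption that {\bf P1--P15} hold, which is precisely the pair of citations you invoke for the cell-permutation claim and the sign-twist of the module, respectively. Your exposition of what those two references contain (preorder reversal via length reversal for 11.7, the $\flat$-twist comparison for 21.5) is accurate and more detailed than anything the paper itself supplies.
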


\subsection{Truncated Induction}

We now turn to a combinatorial description of truncated induction in terms of the above parameter sets.
If $\pi$ is a representation of $W_I$, a parabolic subgroup of $W_n$, Lusztig defined a representation
$J_{W_I}^W (\pi)$ of $W=W_n$, \cite{lusztig:unequal}(20.15).  Its precise definition depends of the parameters of the underlying Hecke algebra, so it is natural to expect that this is manifested in the combinatorics studied above.  Following \cite[\S 2]{mcgovern:leftcells} and \cite{lusztig:classofirreducible}, we note that due to the transitivity of truncated induction and the fact that the situation in type $A$ is well-understood, we need to only understand how truncated induction works when $W_I$ is a maximal parabolic subgroup whose type $A$ component acts by the sign representation on $\pi$.  Henceforth,
let $W_I$ be a maximal parabolic subgroup in $W_n$ with factors  $W'$ of type $B_m$ and $S_l$ of type $A_{l-1}$, where $m+l = n$; furthermore, write $\textup{sgn}_l$ for the sign representation of $S_l$.

Truncated induction behaves well with respect to cell structure. In fact, the following lemma holds for general $W$.
\begin{lemma}[\cite{geck:leftcells}]\label{lemma:paraboliccells} Let $\mathfrak{C}'$ be a left cell of $W_I$.  Then we have
$$J_{W_I}^W([\mathfrak{C}']) \cong [\mathfrak{C}],$$
where $\mathfrak{C}$ is the left cell of $W$ such that $\mathfrak{C}'\subset \mathfrak{C}.$
\end{lemma}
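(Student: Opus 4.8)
The statement has two components: that the left cell $\mathfrak{C}'$ of $W_I$ is contained in a single left cell $\mathfrak{C}$ of $W$, and that truncated induction carries $[\mathfrak{C}']$ onto $[\mathfrak{C}]$. The plan is to dispose of the containment by the standard compatibility of Kazhdan-Lusztig bases, and then to realize the module isomorphism inside the asymptotic ring $J$, where by Theorem \ref{theorem:lusztig2} each cell module appears as a left ideal generated by a distinguished involution.

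First I would record the compatibility of the two Hecke algebras. The inclusion $W_I \hookrightarrow W$ identifies $\mathcal{H}_I = \mathcal{H}(W_I)$ with a subalgebra of $\mathcal{H}$ compatibly with the bar involution and the $T$-basis; since the Kazhdan-Lusztig basis is characterized by bar-invariance together with triangularity against $\{T_x\}$, for $w \in W_I$ the element $C_w$ computed in $\mathcal{H}_I$ coincides with the one computed in $\mathcal{H}$. Consequently $y \leq_{\mathcal{L}} x$ in $W_I$ implies $y \leq_{\mathcal{L}} x$ in $W$, so each $\sim_{\mathcal{L}}$-class of $W_I$ lies in a single $\sim_{\mathcal{L}}$-class of $W$, which is the containment $\mathfrak{C}' \subseteq \mathfrak{C}$. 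The same comparison of $\{C_w\}$ and $\{T_x\}$, together with \textbf{P12}, shows that $\aaa$ and $\Delta$ restrict correctly and that $\mathcal{D}\cap W_I$ is the distinguished set of $W_I$. Hence $J_I := \bigoplus_{w \in W_I} \mathbb{Z}\, t_w$ is a subring of $J$, its identity $\sum_{d \in \mathcal{D}\cap W_I} n_d\, t_d$ is an idempotent of $J$, and the distinguished involution $d$ of $\mathfrak{C}'$ in $W_I$ lies in $\mathcal{D}$; since $d \in \mathfrak{C}' \subseteq \mathfrak{C}$, the uniqueness in \textbf{P13} identifies it with the distinguished involution of $\mathfrak{C}$ in $W$.

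Next I would express both cell modules through this common involution. By the same \textbf{P}-properties used in the proof of Lemma \ref{lemma:12.15} (namely \textbf{P5}--\textbf{P8} and \textbf{P13}) one obtains $J^{\mathfrak{C}}_\mathbb{C} = J_\mathbb{C}\, t_d = \bigoplus_{x \in \mathfrak{C}} \mathbb{C}\, t_x$, with $n_d t_d$ acting as a right identity, and by Theorem \ref{theorem:lusztig2} this is $[\mathfrak{C}]_\spadesuit$; applying the same theorem inside $W_I$ gives $[\mathfrak{C}']_\spadesuit \cong J_{I,\mathbb{C}}\, t_d$. The heart of the argument is then Lusztig's description of truncated induction in the asymptotic setting (\cite[\S 20]{lusztig:unequal}, valid under \textbf{P1-P15}), which identifies $J_{W_I}^W$ with the functor $J_\mathbb{C} \otimes_{J_{I,\mathbb{C}}} (-)$ transported across the isomorphisms $J_\mathbb{C} \cong \mathbb{C}[W]$ and $J_{I,\mathbb{C}} \cong \mathbb{C}[W_I]$. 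Granting this, $J_{W_I}^W([\mathfrak{C}'])_\spadesuit \cong J_\mathbb{C} \otimes_{J_{I,\mathbb{C}}} J_{I,\mathbb{C}}\, t_d \cong J_\mathbb{C}\, t_d = J^{\mathfrak{C}}_\mathbb{C} \cong [\mathfrak{C}]_\spadesuit$, and since $E \mapsto E_\spadesuit$ merely relabels the $W$-action through the fixed algebra isomorphism, the $W$-module isomorphism $J_{W_I}^W([\mathfrak{C}']) \cong [\mathfrak{C}]$ follows.

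The main obstacle is the identification of $\mathbf{j}$-induction with $J_\mathbb{C} \otimes_{J_{I,\mathbb{C}}}(-)$: this is where the full strength of the $J$-ring machinery under \textbf{P1-P15} is needed, and it must be handled carefully because $J_{I,\mathbb{C}}$ is a non-unital subring whose identity is an idempotent $e_I$ of $J_\mathbb{C}$, so the comparison map $a \otimes b\, t_d \mapsto ab\, t_d$ and the injectivity of the chain above should be controlled through $e_I$ (using $e_I t_d = t_d$). A secondary point to verify is the precise equality $J_\mathbb{C}\, t_d = \bigoplus_{x\in\mathfrak{C}} \mathbb{C}\, t_x$, which, as indicated, is a direct consequence of \textbf{P5}--\textbf{P8} and \textbf{P13}.
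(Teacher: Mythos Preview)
The paper does not give its own proof of this lemma: it is stated with the citation \cite{geck:leftcells} and used as a black box. So there is no in-paper argument to compare your proposal against.

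That said, your outline is a reasonable route to the result, and the first paragraph (compatibility of the Kazhdan--Lusztig bases under $\mathcal{H}_I \hookrightarrow \mathcal{H}$, hence $\mathfrak{C}' \subseteq \mathfrak{C}$; and, via \textbf{P12}, the identification of $\mathcal{D}\cap W_I$ with the distinguished set of $W_I$) is exactly what one needs to set things up. The second paragraph, however, leans on the identification of $J_{W_I}^W$ with $J_\mathbb{C}\otimes_{J_{I,\mathbb{C}}}(-)$ transported through the isomorphisms with the group algebras, and you rightly flag this as the crux. As written this is close to assuming what you want: that identification \emph{is} essentially the content of the lemma at the level of the asymptotic ring, and it is not a one-line consequence of \cite[\S 20]{lusztig:unequal}. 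One must check that for a $J_{I,\mathbb{C}}$-module $E_\spadesuit$ the constituents of $J_\mathbb{C}\otimes_{J_{I,\mathbb{C}}}E_\spadesuit$ are exactly those constituents of $\mathrm{Ind}_{W_I}^W E$ whose $\aaa$-value equals $\aaa_E$, which requires the full comparison of the $\phi$-maps $\mathcal{H}\to J$ and $\mathcal{H}_I\to J_I$ together with \textbf{P4}, \textbf{P9}--\textbf{P11}. If you want a self-contained argument, you should either supply that comparison explicitly or, as the paper does, cite Geck's result directly.
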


We first provide a description of the situation in type $B_n$ in terms of symbols.  Consider a symbol $\Lambda'$ of defect $r+1$ and residue $\epsilon$; via the equivalence on symbols, we can assume that it has at least $l$ entries.  If the set of $l$ largest entries of $\Lambda'$ is uniquely defined, then let $\Lambda$ be the symbol obtained by increasing each of the entries in this set by one.  If it is not, then let $\Lambda^\textup{I} $ and $\Lambda^{\textup{II}}$ be the two symbols obtained by increasing the largest $l-1$ entries of $\Lambda'$ and then each of the two $l$th largest entries in turn by one.

\begin{proposition}[\cite{lusztig:unequal}(22.17)] The representation
$J_{W_I}^W ( [\Lambda'] \otimes \textup{sgn}_l )$ is $[\Lambda]$ if the set of $l$ largest entries of $\Lambda'$ is uniquely defined, and $[\Lambda^
\textup{I}] + [\Lambda^{\textup{II}}]$  if it is not.  The former is always the case if $[\Lambda']$ is a symbol of residue $\epsilon \neq 0$.
\end{proposition}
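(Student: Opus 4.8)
The plan is to deduce the statement from Lusztig's formula \cite{lusztig:unequal}(22.17), so the bulk of the work is to match the paper's symbol conventions with Lusztig's and to verify that the two combinatorial operations coincide. Recall that truncated induction $J_{W_I}^W$ selects from the ordinary induction $\mathrm{Ind}_{W_I}^W$ exactly those irreducible constituents whose $\mathbf{a}$-invariant agrees with that of the inducing module; by the transitivity of truncated induction and the complete understanding of type $A$ noted just before the proposition, it suffices to treat the maximal parabolic $W' \times S_l$ with $S_l$ acting through $\mathrm{sgn}_l$, which is precisely the case Lusztig addresses.

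First I would fix the dictionary between the symbols $\Lambda'$ of defect $r+1$ and residue $\epsilon$ used here and the symbols in \cite{lusztig:unequal}, keeping track of the shift by $\epsilon = s - \lfloor s \rfloor$ on the top row and of the defect normalization. Under this dictionary I would check that Lusztig's description of $j$-induction with the sign representation of $S_l$ is exactly the operation of increasing the $l$ largest entries of $\Lambda'$ each by one; concretely, ordinary induction spreads the module over a family of symbols obtained by adding $l$ boxes in all admissible ways, and the constituents singled out by truncated induction, namely those of matching $\mathbf{a}$-invariant, are precisely those in which the added boxes land on the $l$ largest entries. A subsidiary point to verify is that this operation is well-defined on equivalence classes of symbols: using the freedom to enlarge $\Lambda'$ so that it has at least $l$ entries, as the statement already arranges, I would confirm that increasing the $l$ largest entries commutes with the shift $\Lambda \sim \Lambda'$ generating the equivalence, so that $[\Lambda]$ does not depend on the chosen representative.

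Next I would treat the dichotomy. When the $l$-th and $(l+1)$-th largest entries of $\Lambda'$ coincide, the set of $l$ largest entries is not uniquely determined, and there are two ways to distribute the added boxes among the tied entries; these yield the symbols $\Lambda^{\mathrm{I}}$ and $\Lambda^{\mathrm{II}}$, and Lusztig's formula records that the truncated induction is their sum $[\Lambda^{\mathrm{I}}] + [\Lambda^{\mathrm{II}}]$. The final assertion is then a short observation: in a symbol of residue $\epsilon \neq 0$, the top-row entries are congruent to $\epsilon$ modulo $1$ while the bottom-row entries are integers, and each row is strictly increasing, so all entries of $\Lambda'$ are distinct. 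Hence a tie between the $l$-th and $(l+1)$-th largest entries is impossible, the set of $l$ largest entries is always uniquely defined, and only the single symbol $\Lambda$ occurs.

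The main obstacle I anticipate is the notational reconciliation in the first two steps: Lusztig's conventions for the defect, for the placement of the parameter governing the residue, and for which entries are shifted must be carefully aligned with the conventions fixed in Section~\ref{section:irreducible}, and one must be sure the $\mathbf{a}$-selection is being read off on the correct side of the symbol. Once the dictionary is pinned down, both the main formula and the case analysis are immediate transcriptions of (22.17), and the $\epsilon \neq 0$ claim is the elementary distinctness argument above.
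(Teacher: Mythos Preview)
Your approach is correct and matches the paper's treatment: the paper does not prove this proposition at all but simply cites it as Lusztig's result \cite{lusztig:unequal}(22.17), so your plan to translate Lusztig's formula through the symbol dictionary of Section~\ref{section:irreducible} and then observe that $\epsilon \neq 0$ forces all entries to be distinct is exactly the implicit content of that citation. The only part not already in Lusztig is the last sentence about residue $\epsilon \neq 0$, and your distinctness argument (top-row entries lie in $\epsilon + \mathbb{Z}$, bottom-row entries in $\mathbb{Z}$, each row strictly increasing) is precisely the intended justification.
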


It is not difficult to reformulate this result in terms of partitions of rank $r$.  Consider a partition $p=(p_1,p_2, \ldots p_k) \in \mathcal{P}_r$.  We can assume that $k \geq l$ by adding zero parts to $p$ as necessary.   Let $k'$ be the number of parts of $p^\sharp$.  Define
\begin{align*}
p^\textup{I} & = (p_1+2, \ldots , p_l+2, p_{l+1}, \ldots, p_k), \text{ and} \\
p^\textup{II} & = (p_1+2, \ldots , p_{l-1}+2 , p_l+1, p_{l+1}+1, p_{r+2}, \ldots, p_k).
\end{align*}
Note that both $p^\textup{I}$ and $p^\textup{II}$ are again partitions of rank $r$.

\begin{corollary} \label{corollary:truncated}
The representation
$J_{W_I}^W ([p] \otimes \textup{sgn}_l)$ produced by truncated induction is $[p^\textup{I}]$ whenever $p_l > p_{l+1}$, $p_l+r-l$ is odd, or $\epsilon \neq 0$.  Otherwise, $$J_{W_I}^W ( [p] \otimes \textup{sgn}_l) = [p^\textup{I}]+[p^\textup{II}].$$
\end{corollary}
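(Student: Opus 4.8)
The plan is to transport the symbol operation of the preceding proposition through the bijection $\mathcal{P}_r \to Sym^\epsilon_{r+1}$ of Theorem \ref{theorem:bijections}, showing that $\Lambda' \mapsto \Lambda$ becomes $p \mapsto p^{\textup{I}}$ and that the non-unique case $\Lambda' \mapsto \{\Lambda^{\textup{I}}, \Lambda^{\textup{II}}\}$ becomes $p \mapsto \{p^{\textup{I}}, p^{\textup{II}}\}$. The convenient bookkeeping device is the beta-set of $p$: writing $\beta_i = p_i + k' - i$ for the entries produced by $p^\sharp$, the construction of $\Lambda_p$ simply sorts $\{\beta_i\}$ by parity, each even $\beta_i = 2\lambda$ contributing a top entry $\lambda + \epsilon$ and each odd $\beta_i = 2\mu + 1$ contributing a bottom entry $\mu$. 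Two observations drive everything. First, since the defect of $\Lambda_p$ equals $r+1$, one has $k' \equiv r+1 \pmod 2$. Second, raising one entry of the symbol by one is precisely the operation of raising the corresponding $\beta_i$ by two; this preserves the parity of $\beta_i$, hence the row it occupies.

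First I would treat the separated case $p_l > p_{l+1}$. Then $\beta_l - \beta_{l+1} = p_l - p_{l+1} + 1 \geq 2$, so no pair of beta-numbers straddling the boundary is consecutive; the residue twist (which can only transpose a consecutive even-then-odd pair) therefore does not cross the boundary, and the $l$ largest symbol entries are attached exactly to $\beta_1, \dots, \beta_l$. Raising these beta-numbers by two and re-sorting increases $p_1, \dots, p_l$ by two each, producing $p^{\textup{I}}$. Next I would settle the uniqueness dichotomy. Because the top entries carry the summand $\epsilon$ and the bottom entries do not, two entries of $\Lambda'$ can coincide only when $\epsilon = 0$; and a coincidence of the $l$th and $(l+1)$st largest entries occurs exactly when $\beta_l = \beta_{l+1} + 1$ with $\beta_{l+1}$ even, that is, when $p_l = p_{l+1}$ and, after the substitution $k' \equiv r+1$, when $p_l + r - l$ is even. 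This is precisely the stated trichotomy.

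For the non-unique case I would break the tie explicitly on beta-numbers. The coincident pair is the even $\beta_{l+1} = 2m$ and the odd $\beta_l = 2m+1$, both of which give the entry $m$ once $\epsilon = 0$. Raising the odd member $\beta_l$ by two leaves the ordering intact and yields the beta-set of $p^{\textup{I}}$, whereas raising the even member $\beta_{l+1}$ by two makes it overtake $\beta_l$; re-sorting positions $l$ and $l+1$ then places $p_l + 1$ and $p_{l+1}+1$ in those two slots, namely $p^{\textup{II}} = (p_1+2, \dots, p_{l-1}+2, p_l+1, p_{l+1}+1, p_{l+2}, \dots, p_k)$ (the index $p_{r+2}$ in the displayed definition should read $p_{l+2}$). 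Hence the two symbols are $\Lambda_{p^{\textup{I}}}$ and $\Lambda_{p^{\textup{II}}}$, and the proposition delivers $[p^{\textup{I}}] + [p^{\textup{II}}]$.

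The step I expect to be the main obstacle is the unique case when $p_l = p_{l+1}$, where the residue forces the order of the symbol entries to diverge from the order of the beta-numbers. Indeed, when the boundary pair is $(\beta_{l+1}, \beta_l) = (2m, 2m+1)$ the smaller, even beta-number produces the larger entry $m + \epsilon$, so the genuine $l$ largest symbol entries need no longer be those attached to $\beta_1, \dots, \beta_l$. The delicate point is to control, for each parity of $\beta_{l+1}$ --- equivalently of $p_l + r - l$ --- exactly which beta-number the increment lands on after this $\epsilon$-twisted reordering, and to confirm that the subsequent re-sorting of the beta-set reproduces the shape prescribed by the corollary under the conditions $p_l > p_{l+1}$, $p_l + r - l$ odd, or $\epsilon \neq 0$. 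Carrying out this boundary analysis, and reconciling it with the parity substitution $k' \equiv r+1$, is the crux on which the corollary turns.
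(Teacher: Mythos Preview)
Your approach is exactly the paper's: translate the symbol operation of the preceding proposition through the bijection $\mathcal{P}_r \to Sym^\epsilon_{r+1}$ via the beta-numbers $\beta_i = p_i + k' - i$, and use the parity $k' \equiv r+1 \pmod 2$ to convert the condition ``$\beta_l$ odd and $\beta_l = \beta_{l+1}+1$'' into ``$p_l = p_{l+1}$ and $p_l + r - l$ even.'' The paper's proof is in fact terser than yours --- it dispatches the identification of the resulting partitions as ``just a simple calculation'' --- and the boundary case you single out as the main obstacle (the unique case with $p_l = p_{l+1}$, where the $\epsilon$-twist may attach the $l$th-largest symbol entry to $\beta_{l+1}$ rather than $\beta_l$) is not explicitly treated in the paper's proof either.
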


\begin{proof}Using the results of the preceding proposition, we have to check under what conditions the set of $l$ largest entries in a symbol $\Lambda'$ is uniquely defined and then determine the preimages of the symbols $\Lambda^\textup{I}$ and $\Lambda^{\textup{II}}$ under the map of Theorem \ref{theorem:bijections}.
When $\epsilon \neq 0$, the $l$ largest entries in $\Lambda'$ are uniquely determined since all of its entries must be distinct. When $\epsilon =0$, there will be an ambiguity in determining the $l$ largest entries iff $p_l+k'-l$ and $p_{l+1}+k'-l-1$ are consecutive integers with the first one being odd.  Together with the observation that $k'$ is always of the opposite parity from $r$, this gives us the conditions of the proposition.  Determining the partitions corresponding to $\Lambda^\textup{I}$ and $\Lambda^\textup{II}$ is then just a simple calculation.
\end{proof}

Note that the parity conditions of the proposition imply that in the case when $J_{W_I}^W ([p] \otimes \textup{sgn}_l)$ is reducible, the square $s_{l,p_l+1}$ of the Young diagrams of $p^\textup{I}$ and $p^\textup{II}$ is fixed.  In particular, this means that when endowed with the maximal label, the domino $\{s_{l,p_l+1}, s_{l,p_l+2}\}$ constitutes an open cycle in a domino tableau of shape $p^\textup{I}$.  Its image under the moving through map is $\{s_{l+1,p_l+1}, s_{l,p_l+1}\}$ with underlying partition $p^\textup{II}.$  This observation leads to the following lemma:

\begin{lemma}\label{lemma:inductionshapes}
Let $n=m+l$ and consider  $w'=(w_1 \, w_2 \ldots w_m) \in W_m$.  Write $T'=T_r(w')$ for its right tableau of rank $r$ and define a set of partitions $$ \mathbb{P}' = \{shape \, MT(T', C) \; | \; C \subset OC^*(T')\} \subset \mathcal{P}_r(m).$$
Define the set $\mathbb{P} = \{ p^\textup{I} \; | \; p \in \mathbb{P}'\} \cup  \{ p^\textup{II} \; | \; p \in \mathbb{P}' \text{ and $p_l = p_{l+1}$ with $p_l+r-l$ even} \}.$
If $w = (w_1 \, w_2 \ldots w_m \; n \; n-1 \, \ldots m+1) \in W_n$ with right tableau $T=T_r(w)$, then
$$\mathbb{P}=\{shape \, MT(T, C) \; | \; C \subset OC^*(T)\} \subset \mathcal{P}_r(n).$$
\end{lemma}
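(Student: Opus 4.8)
The plan is to separate the statement into a shape computation for $T=T_r(w)$ and a comparison of the moving-through orbits of $T$ and $T'$. Throughout write $\mathrm{sh}(U)$ for the shape of a domino tableau $U$, and recall that the set of tableaux $\{\,MT(U,C)\mid C\subset OC^*(U)\,\}$ coincides with the orbit of $U$ under the relation generated by moving through a single non-core open cycle; hence $\mathbb{P}'$ and $\mathbb{P}$ are exactly the shapes occurring in the orbits of $T'$ and $T$, and it suffices to compare these two orbits.

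First I would read off $T$ from the van Leeuwen insertion of $w=(w_1\,\ldots\,w_m\ n\ n-1\,\ldots\,m+1)$. Since the appended letters $n,n-1,\ldots,m+1$ are the $l$ largest values and occur in strictly decreasing order, their insertion is transparent: each is positive and so inserts a horizontal domino, and being smaller than its predecessor it bumps that predecessor's domino down exactly one row. The cumulative effect is to append one horizontal domino to the end of each of the first $l$ rows, so that $\mathrm{sh}(T)=\mathrm{sh}(T')^{\textup{I}}$ and $T$ restricts to $T'$ on the labels $\{1,\ldots,m\}$. Writing $p=\mathrm{sh}(T')$, the new dominoes form a vertical strip: $D(m+i,T)$ occupies $s_{i,p_i+1}$ and $s_{i,p_i+2}$ for $i=1,\ldots,l$, and $D(n,T)$ is the maximally labelled domino $\{s_{l,p_l+1},s_{l,p_l+2}\}$ from the discussion preceding the lemma.

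Next I would introduce the strip-adding map $\Phi$ that appends precisely this vertical strip of largest-label horizontal dominoes; thus $\Phi(T')=T$ and $\mathrm{sh}(\Phi(U))=\mathrm{sh}(U)^{\textup{I}}$. The core claim is that $\Phi$ carries $OC^*$-moves on the orbit of $T'$ to $OC^*$-moves on the orbit of $T$: for $U$ in the orbit of $T'$ and $c'\in OC^*(U)$, the cycle $c'$ lifts to a non-core open cycle of $\Phi(U)$ with $\Phi(MT(U,c'))=MT(\Phi(U),\tilde{c'})$, so that $\mathrm{sh}(MT(\Phi(U),\tilde{c'}))=\mathrm{sh}(MT(U,c'))^{\textup{I}}$. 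This embeds the orbit of $T'$ into that of $T$ and contributes exactly the shapes $\{\,q^{\textup{I}}\mid q\in\mathbb{P}'\,\}$. The remaining moves come from a single extra cycle: I would show that the only non-core open cycle of any $\Phi(U)$ not of the lifted form is the cycle $c_n$ of the maximally labelled domino $D(n,\Phi(U))$ in row $l$, that by the computation preceding the lemma $c_n$ is genuinely a non-core open cycle precisely when $\mathrm{sh}(U)_l=\mathrm{sh}(U)_{l+1}$ and $\mathrm{sh}(U)_l+r-l$ is even, and that moving through it then yields a tableau of shape $\mathrm{sh}(U)^{\textup{II}}$. Since $c_n$ is disjoint from the lifted cycles, the orbit of $T$ is exactly $\Phi(\text{orbit of }T')$ together with the tableaux $MT(\Phi(U),c_n)$ for those $U$ satisfying the condition, and reading off shapes gives $\{\,q^{\textup{I}}\mid q\in\mathbb{P}'\,\}\cup\{\,q^{\textup{II}}\mid q\in\mathbb{P}',\ q_l=q_{l+1},\ q_l+r-l\text{ even}\,\}=\mathbb{P}$, as required.

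The hard part will be the cycle bookkeeping behind the previous paragraph: proving that appending the vertical strip of largest-label dominoes neither creates nor destroys non-core open cycles apart from the single cycle $c_n$, and that a lifted cycle realizes the $\textup{I}$-shifted shape change even when it reaches the band of modified rows $1,\ldots,l$. That band is the delicate region, since a long cycle of $U$ could a priori tangle with the new dominoes; I expect to control this using that the new dominoes carry the largest labels and sit at the extreme ends of their rows, so that for $i<l$ two consecutive new dominoes either stack into a closed $2\times2$ cycle (when $p_i=p_{i+1}$) or are blocked from opening and contribute nothing to $OC^*$, while the only boundary interaction capable of changing the shape is the one at row $l$ recorded by $c_n$. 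I would carry this out with the explicit descriptions of cycles and of the moving-through map in \cite{garfinkle1} and \cite{pietraho:rscore}, reusing the parity analysis already assembled for Corollary \ref{corollary:truncated}, and finish with the routine verification that $c_n$ is non-core when the displayed condition holds.
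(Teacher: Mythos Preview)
Your approach is correct and matches the paper's. The paper also observes that $T$ is $T'$ with horizontal dominos appended to the first $l$ rows, then carries out exactly the cycle bookkeeping you anticipate via a four-case analysis of how each new domino interacts with the cycles of $T'$ (joining an existing open cycle at its $S_f$ or $S_b$ square, pairing with an adjacent new domino into a closed cycle, or --- only for the row-$l$ domino under the stated parity condition --- forming the singleton extra non-core open cycle $\{n\}$), from which the shape identities follow.
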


\begin{proof}  The lemma relates the non-core open cycles in $T'$ to the non-core open cycles in $T$, hence it follows from the description of the behavior of cycles under domino insertion in \cite{pietraho:rscore}(3.6).  However, things are really simpler than that, and we describe the situation fully.  Note that $T$ is obtained from $T'$ by placing horizontal dominos with labels $m+1$ through $n$ at the end of its first $l$ rows.  Essentially, there are four possibilities.  We write $s_{ij}$ for the left square of the domino added to row $i$ and let $p= shape \, T'$.
    \begin{enumerate}
        \item $s_{ij}=S_f(c)$ for a cycle $c$ of $T'$.  Then the domino joins the cycle $c$ and the final square of the new cycle is $s_{i,j+2}$.
        \item $s_{ij-1}=S_b(c)$ for a cycle $c$ of $T'$.  Then the domino joins the cycle $c$ and the beginning square of the new cycle is $s_{i,j+1}$.
        \item $p_{i-1} = p_{i}$ with $p_i+r-i$ odd.  Then the dominos with labels $m+i-1$ and $m+i$ in $T$ form a closed cycle in $T$.
        \item \label{case:extracycle} $p_{l} = p_{l+1}$ with $p_l+r-l$ even.  Then the domino with label $n$ forms a singleton non-core open cycle in $T$ which does not correspond to a cycle in $T'$.
    \end{enumerate}
If $C \subset OC^*(T')$ and $\widetilde{C}$ is the set of the corresponding cycles in $T$, then it is clear from the above description that  $\{shape \, MT(T, \widetilde{C}) \; | \; C \subset OC^*(T')\} =\{ p^\textup{I} \; | \; p \in \mathbb{P}'\}$.  If case (\ref{case:extracycle}) arises and $T$ has an additional non-core open cycle $c =\{n\}$, then $\{shape \, MT(T, \widetilde{C}\cup c) \; | \; C \subset OC^*(T')\} =\{ p^\textup{II} \; | \; p \in \mathbb{P}'\}.$  The lemma follows.
\end{proof}

\begin{example}
Let $s=3$, so that $r=2$ and $\epsilon=0$, and
consider the partition $(4,3,2^3) \in \mathcal{P}_2(5)$.  It corresponds to the symbol
$$
\Lambda_{[(4,3,2^3)]}= \left(
\begin{array}{cccccc}
  1 & 2 &  3  & 4 \\
  {} & {} & \hspace{-.2in}1 & {}
\end{array}
\right)
\in Sym_3^0
$$
For $l=4$, we have
$J_{W_I}^W ([(4,3,2^3)]\otimes \textup{sgn}_4) = [(6,5,4,3^2)] + [(6,5,4^2,2)].$  Note that both partitions lie in $\mathcal{P}_2(9)$.  In terms of symbols,
$$J_{W_I}^W ([\Lambda_{(4,3,2^3)}] \otimes \textup{sgn}_4)= \left[
\left(
\begin{array}{cccccc}
  2 & 3 &  4  & 5 \\
  {} & {} & \hspace{-.2in}1 & {}
\end{array}
\right)
\right] +
\left[
\left(
\begin{array}{cccccc}
  1 & 3 &  4  & 5 \\
  {} & {} & \hspace{-.2in}2 & {}
\end{array}
\right)
\right]
$$

\end{example}

\section{$W_n$-module structure and standard domino tableaux}\label{section:modulestructure}

Viewing cells as constructible representations allows us to examine their structure inductively.  Using the description of truncated induction and tensoring with sign  derived in the previous section we describe the $W_n$-module carried by each cell in terms of the parametrization of irreducible $W_n$-modules of Section \ref{section:irreducible}.  We begin with a few facts about combinatorial cells.

\begin{lemma} \label{lemma:intersection}
Consider two combinatorial left cells $\mathfrak{C}$ and $\mathfrak{C}'$ in $W_n$ of rank $r$ represented by sets $\mathbb{T}$ and $\mathbb{T}'$ of rank $r$ standard domino tableaux.  Then
$$| \mathfrak{C} \cap  \mathfrak{C}'^{-1}| = M$$ where $M$ is the number of tableaux in $\mathbb{T}$ whose shape matches the shape of a tableau in  $\mathbb{T}'$.
\end{lemma}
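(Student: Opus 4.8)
The plan is to compute each side of the claimed equality $|\mathfrak{C} \cap \mathfrak{C}'^{-1}| = M$ combinatorially via the generalized Robinson-Schensted bijection $G_r$, and to match them shape-by-shape. Recall that $G_r$ sends $w \in W_n$ to a same-shape pair $(S_r(w), T_r(w))$ of rank $r$ standard domino tableaux, and that inversion of a signed permutation swaps the two tableaux, i.e. $G_r(w^{-1}) = (T_r(w), S_r(w))$. A combinatorial left cell is the set of all $w$ whose right tableau $T_r(w)$ lies in the representing set $\mathbb{T}$ (the $MT$-orbit under non-core open cycles); dually, $\mathfrak{C}'^{-1}$ consists of those $w$ whose \emph{left} tableau $S_r(w)$ lies in $\mathbb{T}'$.

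\textbf{Key steps.} First I would unwind the definitions: $w \in \mathfrak{C} \cap \mathfrak{C}'^{-1}$ exactly when $T_r(w) \in \mathbb{T}$ and $S_r(w) \in \mathbb{T}'$. Since $S_r(w)$ and $T_r(w)$ must have the same shape, any such $w$ forces a tableau in $\mathbb{T}$ and a tableau in $\mathbb{T}'$ of a common shape. Second, I would use the bijectivity of $G_r$ onto same-shape pairs: for any prescribed pair $(U, V)$ of rank $r$ tableaux of the \emph{same} shape there is exactly one $w$ with $S_r(w) = V$ and $T_r(w) = U$. Hence
$$
|\mathfrak{C} \cap \mathfrak{C}'^{-1}| = \#\{(U,V) \in \mathbb{T} \times \mathbb{T}' \mid \mathrm{shape}\,U = \mathrm{shape}\,V\}.
$$
Third, I would invoke the \emph{shape-uniqueness} property of the representing sets: within a single combinatorial left cell's representing set $\mathbb{T}$, distinct tableaux have distinct shapes (the moving-through map on a nonempty subset of non-core open cycles always changes the shape, so the map $\mathbb{T} \to \mathcal{P}_r(n)$, $U \mapsto \mathrm{shape}\,U$, is injective), and likewise for $\mathbb{T}'$. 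Given this injectivity, each common shape appearing in both $\mathbb{T}$ and $\mathbb{T}'$ is witnessed by exactly one $U \in \mathbb{T}$ and exactly one $V \in \mathbb{T}'$, so the count of same-shape pairs collapses to the number $M$ of shared shapes, which is precisely the stated quantity.

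\textbf{Main obstacle.} The delicate point is the shape-injectivity of the representing sets. For the \emph{reducible} cells this is the assertion that if $C_1, C_2 \subset OC^*(T)$ are distinct subsets of non-core open cycles then $MT(T, C_1)$ and $MT(T, C_2)$ have distinct shapes. Each open cycle $c$ contributes a definite pair $(s_b(c), s_f(c))$ of removed/added squares, and moving through a set of such cycles alters the shape in a way recorded by exactly which cycles were used; I would argue that the symmetric difference of $C_1$ and $C_2$ leaves a nonzero net change in the boundary, appealing to the structure of open cycles in \cite{pietraho:rscore}. (In the \emph{irreducible} case $\mathbb{T}$ is a singleton and injectivity is automatic.) Once shape-injectivity is in hand the rest is bookkeeping, so this is the step I expect to require the most care; everything else follows formally from the bijection $G_r$ and the swap of tableaux under inversion.
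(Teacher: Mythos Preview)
Your proposal is correct and follows essentially the same approach as the paper: both arguments reduce the count to same-shape pairs $(U,V)\in\mathbb{T}\times\mathbb{T}'$ via the bijection $G_r$ (and the swap of tableaux under inversion), and then invoke the fact that the tableaux in a representing set $\mathbb{T}=\{MT(T,C)\mid C\subset OC^*(T)\}$ all have distinct shapes to conclude that the number of such pairs is exactly $M$. The paper treats shape-injectivity as an immediate consequence of the definition of open cycles rather than as a separate obstacle, but otherwise the structure is identical.
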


\begin{proof}  Suppose first that $\mathfrak{C}$ and $\mathfrak{C}'$ are irreducible so that $\mathbb{T}=\{T\}$ and $\mathbb{T}'=\{T'\}$.  If they are of the same shape, then the intersection $ \mathfrak{C} \cap  \mathfrak{C}'^{-1} = G_r^{-1}(T',T)$; otherwise, it is empty.

On the other hand,  if $\mathfrak{C}$ and $\mathfrak{C}'$ are reducible,  then let $J$ consist of  the tableaux in $\mathbb{T}$ whose shape matches the shape of a tableau in $\mathbb{T}'$ and define $|J|=M$.  Recall that by the definition of a combinatorial left cell, $\mathbb{T} = \{MT(T,C) \; | \; C \in OC^*{T}\}$ for some tableau $T$ and therefore $\mathbb{T}$ consists of only tableaux of differing shapes.
If $T \in J$, write $T'$ for the the unique tableau in $\mathbb{T}'$ of the same shape as $T$.  Then
$$ \mathfrak{C} \cap  \mathfrak{C}'^{-1} = \bigcup_{T \in J} G_r^{-1}(T',T).$$

\end{proof}

We can obtain a slightly better description of the intersection of a combinatorial left cell and a combinatorial right cell by recalling the definition of an extended open cycle in a tableau relative to another tableau of the same shape.  See \cite{garfinkle2}(2.3.1) or \cite{pietraho:equivalence}(2.4) for the details.  In general, an extended open cycle is a union of open cycles.

\begin{corollary}\label{corollary:intersection}
Consider two reducible combinatorial left cells $\mathfrak{C}$ and $\mathfrak{C}'$ in $W_n$ of rank $r$ represented by sets $\mathbb{T}$ and $\mathbb{T}'$ of rank $r$ standard domino tableaux.  If $T \in \mathbb{T}$ and $T' \in \mathbb{T}'$ are of the same shape and $m$ is the number of non-core extended open cycles $m$ in $T$ relative to $T'$, then
$$|\mathfrak{C} \cap  \mathfrak{C}'^{-1}| = 2^m.$$
\end{corollary}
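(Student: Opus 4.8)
The plan is to convert the cardinality into a count of shapes by means of Lemma~\ref{lemma:intersection}, and then to evaluate that count using the theory of extended open cycles. By Lemma~\ref{lemma:intersection}, $|\mathfrak{C} \cap \mathfrak{C}'^{-1}|$ equals the number of tableaux of $\mathbb{T}$ whose shape also occurs among the shapes of $\mathbb{T}'$. Since a reducible combinatorial left cell is represented by a set of tableaux of pairwise distinct shapes, this number is precisely $|\Sigma \cap \Sigma'|$, where $\Sigma = \{shape\, MT(T,C) \mid C \subseteq OC^*(T)\}$ and $\Sigma' = \{shape\, MT(T',C') \mid C' \subseteq OC^*(T')\}$ are the two families of shapes reachable by moving through non-core open cycles. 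It therefore suffices to prove $|\Sigma \cap \Sigma'| = 2^m$.

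The key input is the structure of the non-core extended open cycles $E_1, \dots, E_m$ of $T$ relative to $T'$ taken from \cite{garfinkle2}(2.3.1) and \cite{pietraho:equivalence}(2.4). First I would record the two facts I need from that construction: (i) the $E_i$ group the non-core open cycles of $T$ into $m$ disjoint blocks; and (ii) each $E_i$ is matched with a non-core extended open cycle $E_i'$ of $T'$ relative to $T$ in such a way that, for every $I \subseteq \{1, \dots, m\}$, the tableaux $MT(T, \bigcup_{i \in I} E_i)$ and $MT(T', \bigcup_{i \in I} E_i')$ have the same shape. Granting this, for each $I \subseteq \{1, \dots, m\}$ the common shape $\sigma(I) := shape\, MT(T, \bigcup_{i\in I} E_i) = shape\, MT(T', \bigcup_{i\in I} E_i')$ lies in $\Sigma \cap \Sigma'$, so $I \mapsto \sigma(I)$ produces $2^m$ elements of $\Sigma \cap \Sigma'$.

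It remains to check that $I \mapsto \sigma(I)$ is a bijection onto $\Sigma \cap \Sigma'$. Injectivity is immediate: by (i) distinct subsets $I$ yield distinct sets $\bigcup_{i \in I} E_i$ of non-core open cycles, and distinct subsets of $OC^*(T)$ produce tableaux of distinct shapes, as noted in the proof of Lemma~\ref{lemma:intersection}. Surjectivity is the crux and is where I expect the main difficulty to lie. One must show that whenever $C \subseteq OC^*(T)$ is such that $shape\, MT(T,C)$ already belongs to $\Sigma'$, the set $C$ is necessarily a union of the blocks $E_i$; equivalently, that moving $T$ through only part of an extended open cycle produces a shape unreachable from $T'$. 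This is exactly the sense in which extended open cycles are the minimal sub-collections of open cycles preserving the same-shape relation between the two tableaux, and establishing it rigorously requires a careful analysis of how the open cycles of $T$ and of $T'$ are paired in the extended-cycle construction of the cited references. Once surjectivity is secured, $I \mapsto \sigma(I)$ is a bijection and $|\Sigma \cap \Sigma'| = 2^m$, completing the proof.
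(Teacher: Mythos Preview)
Your proposal is correct and follows essentially the same route as the paper. Both arguments reduce to the characterization of same-shape pairs $(S,S')\in\mathbb{T}\times\mathbb{T}'$ as precisely those obtained from $(T,T')$ by moving through a subset of the non-core extended open cycles, invoking the minimality property of extended open cycles from \cite{garfinkle2} and \cite{pietraho:equivalence}; the paper simply packages this as $\mathfrak{C}\cap\mathfrak{C}'^{-1}=\bigcup_{D\subset E} G_r^{-1}\big(MT((T',T),D)\big)$ rather than passing through the shape sets $\Sigma,\Sigma'$ and Lemma~\ref{lemma:intersection}, but the content is the same.
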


\begin{proof}
An extended open cycle in $T$ relative to $T'$ is a minimal set of open cycles in $T$  and $T'$ such that moving through it produces another pair of tableaux of the same shape.  Consequently, moving through two different extended open cycles are independent operations.  Noting that
$$\mathbb{T} =\{ MT(T,C) \; | \; C\subset OC^*(T)\} \text{ and } \mathbb{T}' =\{ MT(T',C) \; | \; C\subset OC^*(T')\},$$
we have that a tableau-pair $(S,S') \in \mathbb{T} \times \mathbb{T}'$ is same-shape iff it differs from $(T,T')$ by moving through a set of non-core extended open cycles in $T$ relative to $T'$.
Thus, if $E$ is the set of non-core extended open cycles in $T$ relative to $T'$, then
$$\mathfrak{C} \cap  \mathfrak{C}'^{-1}=\bigcup_{D \subset E} G_r^{-1}\big(MT((T',T),D)\big),$$
from which the result follows.
\end{proof}

Recall the parameter $s$ derived from a weight function $L$ in type $B_n$.  We will call a Kazhdan-Lusztig left cell in this setting a {\it left cell of weight $s$}.   C.~Bonnaf\'e \cite{bonnafe:knuth} has shown that:
    \begin{itemize}
        \item under the assumption that statements {\bf P1-P15} of Section \ref{section:klcells} hold,
                when $s \not \in \mathbb{N}$, left cells of weight $s$ are precisely the irreducible combinatorial left cells of rank $r=\lfloor s \rfloor$, and
        \item when $s \in \mathbb{N}$, left cells of weight $s$ are unions of reducible combinatorial left cells of rank $r=s-1$.
    \end{itemize}

In this way, as in Definition \ref{definition:combinatorialcells}, we can say that a left cell of weight $s$ is {\it represented by} a set of standard domino tableaux of rank $r$.  For non-integer $s$, this set consists of the unique tableau representing the irreducible combinatorial left cell, and in the latter, it is the union of the sets of tableaux representing each of the reducible combinatorial cells in the Kazhdan-Lusztig cell.  In what follows, we assume that statements {\bf P1-P15} hold.

\begin{lemma}\label{lemma:disjointshapes}
Suppose that $\mathfrak{C}$ is a left cell of weight $s$ and
$\mathfrak{C} = \coprod_i \mathfrak{D}_i $
is its decomposition into combinatorial left cells of rank $r$.  If we let $\mathbb{T}_i$ be the set of domino tableaux representing $\mathfrak{D}_i,$ then the set of shapes of tableaux in $\mathbb{T}_i$ is disjoint from the set of shapes of tableaux in $\mathbb{T}_j$ whenever $i \neq j$.
\end{lemma}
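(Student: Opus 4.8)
The plan is to prove the statement by a global counting argument that compares two evaluations of $|\mathfrak{C} \cap \mathfrak{C}^{-1}|$. When $s \notin \mathbb{N}$ the cell $\mathfrak{C}$ is a single irreducible combinatorial cell, so the decomposition $\mathfrak{C} = \coprod_i \mathfrak{D}_i$ has only one term and there is nothing to prove; thus I assume $s \in \mathbb{N}$ and $r = s-1$, with each $\mathfrak{D}_i$ a reducible combinatorial left cell represented by a tableau set $\mathbb{T}_i$. Since a tableau determines the combinatorial cell of any element having it as right tableau, the sets $\mathbb{T}_i$ are pairwise disjoint as sets of tableaux. For each pair $(i,j)$ let $M_{ij}$ be the number of tableaux in $\mathbb{T}_i$ whose shape matches the shape of some tableau in $\mathbb{T}_j$; the desired conclusion is exactly the assertion that $M_{ij}=0$ whenever $i \neq j$.

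First I would compute $|\mathfrak{C}\cap\mathfrak{C}^{-1}|$ via involutions. An element $x$ is an involution iff $S_r(x) = T_r(x)$, so the involutions of $W_n$ are in bijection with rank $r$ domino tableaux $T$ through $T \mapsto G_r^{-1}(T,T)$. Such an involution lies in $\mathfrak{C}$ precisely when $T \in \bigcup_i \mathbb{T}_i$, and since the $\mathbb{T}_i$ are disjoint this gives $\sum_i |\mathbb{T}_i|$ involutions in $\mathfrak{C}$. Because the cell modules in type $B_n$ are constructible, and constructible representations here are multiplicity-free, Corollary \ref{corollary:involutions} applies and identifies $\mathfrak{C} \cap \mathfrak{C}^{-1}$ with the set of involutions in $\mathfrak{C}$; hence $|\mathfrak{C}\cap\mathfrak{C}^{-1}| = \sum_i |\mathbb{T}_i|$.

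Next I would compute the same quantity cell-by-cell. From $\mathfrak{C} = \coprod_i \mathfrak{D}_i$ and $\mathfrak{C}^{-1} = \coprod_j \mathfrak{D}_j^{-1}$ we obtain the disjoint decomposition $\mathfrak{C} \cap \mathfrak{C}^{-1} = \coprod_{i,j} (\mathfrak{D}_i \cap \mathfrak{D}_j^{-1})$, and Lemma \ref{lemma:intersection} gives $|\mathfrak{D}_i \cap \mathfrak{D}_j^{-1}| = M_{ij}$. As every tableau matches its own shape, $M_{ii} = |\mathbb{T}_i|$, so $|\mathfrak{C}\cap\mathfrak{C}^{-1}| = \sum_{i,j} M_{ij} = \sum_i |\mathbb{T}_i| + \sum_{i \neq j} M_{ij}$. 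Comparing with the involution count forces $\sum_{i \neq j} M_{ij} = 0$, and since each $M_{ij} \geq 0$ we conclude $M_{ij}=0$ for all $i \neq j$, which is precisely the disjointness of the shape sets.

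The main obstacle is securing the exact equality $|\mathfrak{C}\cap\mathfrak{C}^{-1}| = \sum_i|\mathbb{T}_i|$ rather than a mere inequality: the involutions always contribute at least this much to $\mathfrak{C}\cap\mathfrak{C}^{-1}$, and only the reverse bound rules out off-diagonal intersections. That reverse bound is exactly what Corollary \ref{corollary:involutions} supplies, so the whole argument hinges on the multiplicity-freeness of the cell (constructible) $W_n$-modules, and I would confirm this input is available in type $B_n$ under the standing hypothesis {\bf P1-P15} before invoking the corollary. The remaining steps are bookkeeping, the only combinatorial facts needed being the involution/self-paired-tableau correspondence for $G_r$ and the shape-matching count of Lemma \ref{lemma:intersection}.
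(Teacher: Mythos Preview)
Your argument is correct and matches the paper's approach: both use Corollary~\ref{corollary:involutions} to identify $\mathfrak{C}\cap\mathfrak{C}^{-1}$ with the involutions in $\mathfrak{C}$, observe that these involutions already fill up the diagonal pieces $\mathfrak{D}_i\cap\mathfrak{D}_i^{-1}$, and conclude via Lemma~\ref{lemma:intersection} that the off-diagonal intersections $\mathfrak{D}_i\cap\mathfrak{D}_j^{-1}$ vanish. The only difference is cosmetic---you phrase it as a cardinality comparison while the paper argues by set containment---and your explicit flag that multiplicity-freeness of the cell modules must be secured before invoking Corollary~\ref{corollary:involutions} is a point the paper leaves implicit.
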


\begin{proof}
By Corollary \ref{corollary:involutions}, $\mathfrak{C}\cap \mathfrak{C}^{-1}$ consists of the involutions in $\mathfrak{C}$.  The set of involutions in each combinatorial cell $\mathfrak{D}_i$ consists of $\mathfrak{D}_i \cap \mathfrak{D}_i^{-1}$.  This forces $\mathfrak{D}_i \cap \mathfrak{D}_j^{-1}=\varnothing$ whenever $i\neq j$, which can only occur if  the set of shapes of tableaux in $\mathbb{T}_i$ is disjoint from the set of shapes of tableaux in $\mathbb{T}_j$, by Lemma \ref{lemma:intersection}.
\end{proof}

We first show that the shapes of the standard domino tableaux of rank $r$ representing a left cell of weight $s$  determine its $W_n$-module structure:

\begin{definition} Suppose $\mathbb{T}$ is a set of standard domino tableaux of rank $r$.  For $T \in \mathbb{T}$, we will write $p_T \in \mathcal{P}_r(n)$ for its underlying partition, and define
$$[\mathbb{T}] = \bigoplus_{T \in \mathbb{T}} [p_T].$$
\end{definition}

\begin{lemma}\label{lemma:shapes}
Suppose that $\mathfrak{C}$  and  $\mathfrak{C'}$ are left cells of weight $s$ in $W_n$ and
$$ \mathfrak{C} = \coprod_{i \leq c} \mathfrak{D}_i \text{ as well as } \mathfrak{C}' = \coprod_{i \leq d} \mathfrak{D}'_i$$
are their decompositions into combinatorial left cells of rank $r$.  Suppose that each $\mathfrak{D}_i$ and $\mathfrak{D}'_i$ is represented by the set of rank $r$ tableaux $\mathbb{T}_i$ and $\mathbb{T}'_i$, respectively.  Then $[\mathfrak{C}] \cong [\mathfrak{C'}]$ iff $c=d$ and, suitably ordered, $[\mathbb{T}_i] \cong [\mathbb{T}'_i]$ for all $i$.
\end{lemma}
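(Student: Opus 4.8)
The plan is to compare the two cell modules entirely through the Hom-pairing $\dim\textup{Hom}_W(-,-)$, so that I never have to identify the irreducible constituents of $[\mathfrak{C}]$ directly (in particular I will not need to know in advance that $[\mathfrak{C}]$ is multiplicity-free). For a weight-$s$ cell $\mathfrak{C}=\coprod_i\mathfrak{D}_i$ write $N(\mathfrak{C})=\bigoplus_i[\mathbb{T}_i]$. The first step is to establish, for every pair of weight-$s$ cells, the identity
$$\dim\textup{Hom}_W([\mathfrak{C}],[\mathfrak{C}'])=\dim\textup{Hom}_W\big(N(\mathfrak{C}),N(\mathfrak{C}')\big).$$
The left side equals $|\mathfrak{C}\cap(\mathfrak{C}')^{-1}|$ by Lemma \ref{lemma:12.15}, and splitting along the combinatorial decompositions gives $\sum_{i,j}|\mathfrak{D}_i\cap(\mathfrak{D}'_j)^{-1}|=\sum_{i,j}M_{ij}$, where by Lemma \ref{lemma:intersection} $M_{ij}$ is the number of shapes common to $\mathbb{T}_i$ and $\mathbb{T}'_j$. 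On the other side, each $[\mathbb{T}_i]=\bigoplus_{T\in\mathbb{T}_i}[p_T]$ is a sum over \emph{pairwise distinct} shapes, so Schur's lemma together with Corollary \ref{corollary:tensoringwithsign} (distinct partitions of rank $r$ index non-isomorphic irreducibles) gives $\dim\textup{Hom}_W([\mathbb{T}_i],[\mathbb{T}'_j])=M_{ij}$; summing over $i,j$ recovers the right side. This is the computational heart of the argument.

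Next I would deduce the lemma from this identity by a positivity argument on the character pairing. The general principle I would record is: for any finite family $\{M_k\}$ of ($\mathbb{C}$-)semisimple $W$-modules, $M_a\cong M_b$ if and only if $\dim\textup{Hom}_W(M_a,M_k)=\dim\textup{Hom}_W(M_b,M_k)$ for every $k$ in the family. Indeed, writing classes in the representation ring and using the positive-definite pairing $\langle\rho,\rho'\rangle=\dim\textup{Hom}_W(\rho,\rho')$ (for which the irreducibles are orthonormal), the hypothesis says $[M_a]-[M_b]$ is orthogonal to every $[M_k]$; since $[M_a]-[M_b]$ itself lies in the span of the $[M_k]$, it is orthogonal to itself and hence zero. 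Applying this principle to the family $\{[\mathfrak{C}'']\}$ of all weight-$s$ cell modules, and then to the family $\{N(\mathfrak{C}'')\}$, while using that the two resulting Gram matrices coincide entry-by-entry by the identity above, yields the equivalence
$$[\mathfrak{C}]\cong[\mathfrak{C}']\iff N(\mathfrak{C})\cong N(\mathfrak{C}').$$

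Finally I would translate $N(\mathfrak{C})\cong N(\mathfrak{C}')$ into the stated combinatorial condition. By Lemma \ref{lemma:disjointshapes} the shape sets of the blocks $\mathbb{T}_i$ of a single cell are pairwise disjoint, so $N(\mathfrak{C})$ is multiplicity-free and $N(\mathfrak{C})\cong N(\mathfrak{C}')$ holds exactly when $\mathfrak{C}$ and $\mathfrak{C}'$ present the same total set of shapes. The backward direction of the lemma is then immediate, since $c=d$ with $[\mathbb{T}_i]\cong[\mathbb{T}'_i]$ trivially forces $N(\mathfrak{C})\cong N(\mathfrak{C}')$. For the forward direction one must promote "equal total shape sets" to a block-by-block match, i.e. $c=d$ and, suitably reordered, $[\mathbb{T}_i]\cong[\mathbb{T}'_i]$ for all $i$; this uses that the decomposition of a weight-$s$ cell into combinatorial cells is canonical and that each block is a single orbit of tableaux under moving through non-core open cycles, so that the partition of the shape set into blocks is intrinsic to the shapes present. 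I expect this last upgrade — verifying that the block partition is recovered from the bare set of shapes — to be the main obstacle, since moving through is a priori an operation on tableaux rather than on shapes, and it is precisely the disjointness supplied by Lemma \ref{lemma:disjointshapes} that lets each shape determine its block unambiguously.
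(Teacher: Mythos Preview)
Your approach is correct and takes a genuinely different route from the paper's. The paper argues case by case: for $s\in\mathbb{N}$ it compares $|\mathfrak{C}\cap\mathfrak{C}^{-1}|$, $|\mathfrak{C}'\cap(\mathfrak{C}')^{-1}|$, and $|\mathfrak{C}\cap(\mathfrak{C}')^{-1}|$ directly via Corollary~\ref{corollary:intersection} on extended open cycles, obtaining the numerical constraint $\sum_i 2^{c_i}=\sum_i 2^{d_i}=\sum_{i\in I} 2^{m_i}$ with $m_i\le c_i,d_{i'}$, which forces the block-by-block match. You instead establish the uniform identity $\dim\textup{Hom}_W([\mathfrak{C}],[\mathfrak{C}'])=\dim\textup{Hom}_W(N(\mathfrak{C}),N(\mathfrak{C}'))$ from Lemmas~\ref{lemma:12.15} and~\ref{lemma:intersection} alone and then run a Gram-matrix positivity argument over the family of all weight-$s$ cells; this bypasses extended open cycles entirely, which is a real simplification. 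Your acknowledged obstacle --- recovering the block partition from the bare set of shapes --- is the one place combinatorial input is still needed, and it is precisely the fact the paper invokes in its converse direction: for two same-shape tableaux there is a bijection between their non-core open cycles matching beginning and final squares, so the orbit of shapes under moving through subsets of $OC^*$ depends only on the starting shape. With that fact your Step~3 goes through and the block partitions of the common shape set must agree. The paper's route gives a more hands-on count of the intersections; yours isolates the representation-theoretic content cleanly and shows that Corollary~\ref{corollary:intersection} is not actually required for this lemma.
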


\begin{proof}
For clarity, we treat the integer and non-integer values of $s$ separately.  First assume $s \not \in \mathbb{N}$ so that $c=d=1$ and take  $\{T\}= \mathbb{T}_1$ and $\{T'\}=\mathbb{T}_1'.$  By Lemmas \ref{lemma:12.15} and \ref{lemma:intersection}, we have
$\dim \textup{Hom}_W([\mathfrak{C}],[\mathfrak{C}]) = \dim \textup{Hom}_W([\mathfrak{C}'],[\mathfrak{C}'])=1.$  Furthermore, we have that
$\dim \textup{Hom}_W([\mathfrak{C}],[\mathfrak{C}']) = | \mathfrak{C} \cap \mathfrak{C}'^{-1} |=1$
if and only if the shapes of $T$ and $T'$ coincide; otherwise, $\dim \textup{Hom}_W([\mathfrak{C}],[\mathfrak{C}'])=0$.  The lemma follows.

Next, assume $s \in \mathbb{N}$. Suppose first that $[\mathfrak{C}] \cong [\mathfrak{C'}].$  Then
$\dim \textup{Hom}(\mathfrak{C},\mathfrak{C}) = \dim \textup{Hom}(\mathfrak{C}',\mathfrak{C'}) =\dim \textup{Hom}(\mathfrak{C},\mathfrak{C}')$, and by Lemma \ref{lemma:12.15}, $|\mathfrak{C} \cap \mathfrak{C}^{-1}|= |\mathfrak{C}' \cap \mathfrak{C'}^{-1}|=|\mathfrak{C} \cap \mathfrak{C'}^{-1}|.$  By Lemma \ref{lemma:disjointshapes}, we have
$$\sum_{i\leq c} |\mathfrak{D}_i \cap \mathfrak{D}_i^{-1}| = \sum_{i\leq d} |\mathfrak{D}'_i \cap {\mathfrak{D}'_i}^{-1}| = \sum_{i,j} |\mathfrak{D}_i \cap {\mathfrak{D}'_j}^{-1}|.$$
We can now use Corollary \ref{corollary:intersection} to examine the terms of this equality.  For a combinatorial cell $\mathfrak{D}_i$, there is at most one cell $\mathfrak{D}'_{i'}$ such that there are $T_i \in \mathbb{T}_i$ and $T'_{i'} \in \mathbb{T}'_{i'}$ of the same shape, by Lemma \ref{lemma:disjointshapes}. Let $I$ be the set of $i$ for which this occurs. Let $c_i$ and $d_i$ be the numbers of non-core open cycles in $T_i$ and $T'_{i'}$ and for each $i\in I$, let $m_i$ be the number of non-core extended open cycles in $T_i$ relative to $T'_{i'}$.  Then $m_i \leq c_i, d_{i'}$ with equality iff the non-core extended open cycles are just the non-core open cycles.   By Corollary \ref{corollary:intersection},
$\sum_{i\leq c} |\mathfrak{D}_i \cap \mathfrak{D}_i^{-1}| = \sum_{i\leq c} 2^{c_i},$ $ \sum_{i\leq d} |\mathfrak{D}'_i \cap {\mathfrak{D}'_i}^{-1}| = \sum_{i\leq d} 2^{d_i}$, and $\sum_{I} |\mathfrak{D}_i \cap {\mathfrak{D}'_{i'}}^{-1}| = \sum_I 2^{m_i}.$  But the previous equation now implies that $m_i=c_i=d_{i'}$, $c=d$, $I=\{1, \ldots, c\}$ and by the definition of a combinatorial left cell in our setting, that $[\mathbb{T}_i] \cong [\mathbb{T}'_{i'}]$ for all $i \in I$.

Conversely, assume that $c=d$ and $[\mathbb{T}_i] \cong [\mathbb{T}'_i]$ for all $i$ and choose tableaux $T_i \in \mathbb{T}_i$ and $T'_i \in \mathbb{T}'_i$ of the same shape.  By the definition of combinatorial cells, there is a correspondence between the non-core open cycles of $T_i$ and those of $T'_i$ such that their beginning and final squares coincide, implying that the set of non-core extended open cycles in $T_i$ relative to $T'_i$ is precisely the set of open cycles of $T_i$.  Therefore, for each $i$ we have
$|\mathfrak{D}_i \cap \mathfrak{D}_i^{-1}| = |\mathfrak{D}_i \cap {\mathfrak{D}'_i}^{-1}|.$  Consequently, by Lemmas \ref{lemma:disjointshapes} and \ref{lemma:12.15}, and Corollary \ref{corollary:intersection}:
$$\dim \textup{Hom} (\mathfrak{C}, \mathfrak{C}')= \sum_i |\mathfrak{D}_i \cap {\mathfrak{D}'_i}^{-1}| = \sum_i |\mathfrak{D}_i \cap \mathfrak{D}_i^{-1}| = \dim \textup{Hom} (\mathfrak{C}, \mathfrak{C}).$$
Reversing the roles of $\mathfrak{C}$ and $\mathfrak{C}'$ above implies the desired result.
\end{proof}

\begin{theorem} \label{theorem:main} Suppose that $\mathfrak{C}$ is a left cell of weight $s$ in $W_n$  represented by a set $\mathbb{T}$ of standard domino tableaux of rank $r$.  Then $[\mathfrak{C}] \cong [\mathbb{T}]$ as $W_n$-modules.
\end{theorem}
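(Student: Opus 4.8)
The plan is to argue by induction on $n$, using M.~Geck's theorem that, under {\bf P1-P15}, the $W_n$-modules carried by Kazhdan-Lusztig left cells are exactly the constructible representations \cite{geck:constructible}. Every constructible representation is built from the trivial representation by a sequence of truncated parabolic inductions and sign twists, and by the transitivity of truncated induction together with the understood type $A$ situation, it suffices to follow these two operations. The base case $n \leq 1$ is immediate, and for the inductive step I would establish $[\mathfrak{C}] \cong [\mathbb{T}]$ directly by treating the sign twist and the truncated induction separately; throughout, the combinatorial description of left cells of weight $s$ as (unions of) combinatorial cells from \cite{bonnafe:knuth} guarantees that the representing set $\mathbb{T}$ is well defined.

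For the sign twist, suppose $[\mathfrak{C}] \cong [\mathfrak{C}_1] \otimes \textup{sgn}$ for a left cell $\mathfrak{C}_1$ of weight $s$. By Lemma \ref{lemma:longword} I may take $\mathfrak{C} = \mathfrak{C}_1 w_0$, which is again a left cell. Right multiplication by $w_0$ transposes the right tableau under $G_r$ and preserves the combinatorial cell structure, so if $\mathbb{T}_1$ represents $\mathfrak{C}_1$ then $\mathbb{T}_1^t = \{ T^t \mid T \in \mathbb{T}_1 \}$ represents $\mathfrak{C}$. The inductive hypothesis gives $[\mathfrak{C}_1] \cong [\mathbb{T}_1]$, and Corollary \ref{corollary:tensoringwithsign} yields $[p_T^t] \cong [p_T] \otimes \textup{sgn}$ for each shape; summing over $T \in \mathbb{T}_1$ then gives $[\mathfrak{C}] \cong [\mathbb{T}_1] \otimes \textup{sgn} \cong [\mathbb{T}_1^t] = [\mathbb{T}]$.

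For the truncated induction step, write $[\mathfrak{C}] = J_{W_I}^W([\mathfrak{C}'] \otimes \textup{sgn}_l)$ with $W_I = W_m \times S_l$ a maximal parabolic whose type $A$ factor acts by sign and $\mathfrak{C}'$ a left cell of weight $s$ in $W_m$, $m = n-l$. By Lemma \ref{lemma:paraboliccells} this is realized by the genuine inclusion $\mathfrak{C}' \subset \mathfrak{C}$, and one may take a representative of the form $w = (w_1 \cdots w_m \; n \; n-1 \cdots m+1)$ with $(w_1 \cdots w_m) \in \mathfrak{C}'$. Applying the inductive hypothesis $[\mathfrak{C}'] \cong [\mathbb{T}']$ and the additivity of truncated induction, I compute $[\mathfrak{C}] = \bigoplus_{T' \in \mathbb{T}'} J_{W_I}^W([p_{T'}] \otimes \textup{sgn}_l)$, and Corollary \ref{corollary:truncated} evaluates each summand as $[p_{T'}^\textup{I}]$ or $[p_{T'}^\textup{I}] + [p_{T'}^\textup{II}]$. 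Lemma \ref{lemma:inductionshapes} identifies exactly this collection of shapes with the shapes of the representing set of the combinatorial cell of $w$, and running over the combinatorial cells comprising $\mathfrak{C}'$ exhausts $\mathbb{T}$. Since the shapes occurring in $\mathbb{T}$ are pairwise distinct—within a single combinatorial cell by construction and across cells by Lemma \ref{lemma:disjointshapes}—both modules are multiplicity-free with the same constituents, whence $[\mathfrak{C}] \cong [\mathbb{T}]$.

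The step I expect to demand the most care is the exact alignment in the truncated induction case: the condition under which Corollary \ref{corollary:truncated} makes $J_{W_I}^W([p] \otimes \textup{sgn}_l)$ reducible (namely $\epsilon = 0$ with $p_l = p_{l+1}$ and $p_l + r - l$ even) must coincide on the nose with the condition in Lemma \ref{lemma:inductionshapes} under which domino insertion of $n, n-1, \ldots, m+1$ produces the extra singleton non-core open cycle $\{n\}$ contributing the shape $p^\textup{II}$. One must also verify that, in the integer parameter case where $\mathfrak{C}$ is a union of reducible combinatorial cells, the correspondence $w' \mapsto w$ matches these combinatorial cells coherently, so that no shape is produced twice and none is omitted; the disjointness in Lemma \ref{lemma:disjointshapes} and the resulting multiplicity-freeness are precisely what make this bookkeeping rigorous.
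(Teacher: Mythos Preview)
Your approach follows the paper's closely: both induct along the constructible recursion, use the explicit representative $w = (w_1 \cdots w_m\; n\; n{-}1 \cdots m{+}1)$, and invoke Lemmas~\ref{lemma:longword}, \ref{lemma:paraboliccells}, \ref{lemma:inductionshapes} and Corollaries~\ref{corollary:tensoringwithsign}, \ref{corollary:truncated} in the same way. There is, however, a genuine missing step. When you write ``I may take $\mathfrak{C} = \mathfrak{C}_1 w_0$'' in the sign case (and analogously in the truncated induction case, where Lemma~\ref{lemma:paraboliccells} hands you the cell \emph{containing} $\mathfrak{C}'$, which need not be the given $\mathfrak{C}$), you are silently replacing the given cell by another cell carrying an isomorphic module. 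Your induction therefore only establishes $[\mathfrak{C}^*] \cong [\mathbb{T}^*]$ for one explicitly constructed cell $\mathfrak{C}^*$ per isomorphism class of cell modules. To reach the theorem for an arbitrary $\mathfrak{C}$ with representing set $\mathbb{T}$, you still need the implication $[\mathfrak{C}] \cong [\mathfrak{C}^*] \Rightarrow [\mathbb{T}] \cong [\mathbb{T}^*]$, i.e.\ that two left cells with isomorphic modules have representing sets with the same multiset of shapes.

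This is exactly Lemma~\ref{lemma:shapes}, which the paper invokes as its very first move before running the induction you describe. That lemma is not a formality: its proof combines Lemma~\ref{lemma:12.15} with the extended open cycle count of Corollary~\ref{corollary:intersection}, and Lemma~\ref{lemma:disjointshapes} alone (which you do cite) does not supply it. A symptom of the same gap is your appeal to ``the inductive hypothesis'' for $\mathfrak{C}_1$ in the sign case: since $\mathfrak{C}_1 \subset W_n$, induction on $n$ gives you nothing there, and the recursion along constructibility is only well-founded once one has already reduced to a chosen representative in each isomorphism class. Insert Lemma~\ref{lemma:shapes} as a preliminary reduction and your argument becomes the paper's.
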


\begin{proof}
In light of the result from Lemma \ref{lemma:shapes}, we can prove the theorem by verifying it holds for a representative of each isomorphism class of left cells.  Under our assumptions, the results of \cite{geck:constructible} hold and left cell modules coincide with constructible representations of $W_n$.  Therefore, a representative of each isomorphism class of left cells can be obtained by repeated truncated induction and tensoring with sign.  Recall our description of irreducible $W_n$-modules by partitions of rank $r$.  Via Corollaries \ref{corollary:tensoringwithsign} and \ref{corollary:truncated}, we have a description of both operations on the level of partitions.  We verify that the effect of truncated induction and tensoring with sign on the shapes of the tableaux representing a left cell is the same, and the theorem follows by induction.

We treat the integer and non-integer values of $s$ separately.  First assume $s \not \in \mathbb{N}$, so that each left cell is represented by a single tableau.  We begin by investigating the effect on tensoring with sign.
If $[\mathfrak{C}]$ is a left cell module and $w \in \mathfrak{C}$, then $\mathfrak{C}$ is represented by the tableau $T_r(w)$ of shape $p$.   By Lemma \ref{lemma:longword}, $\mathfrak{C}w_0$ is also a left cell and  $[\mathfrak{C}w_0] \cong [\mathfrak{C}] \otimes \textup{sgn}.$  It is represented by the tableau $T_r(ww_0) = T_r(w)^t$ of shape $p^t$.  By Corollary \ref{corollary:tensoringwithsign}, if we assume that $[\mathfrak{C}]$ carries the irreducible module associated to the shape of its representative tableau, then so does $[\mathfrak{C}w_0]\cong [\mathfrak{C}] \otimes \textup{sgn}.$

For the case of truncated induction, consider a maximal parabolic subgroup $W_I = W_m \times S_l$ of $W_n$.  Choose $w'=(w_1 \, w_2 \ldots w_m) \in W_m$ and let $\mathfrak{C}'$ be its left cell, represented by the tableau $T'=T_r(w')$.  Let $p = shape \; T'$.  By Lemma \ref{lemma:paraboliccells}, $J_{W_I}^W( [\mathfrak{C}'] \otimes  \textup{sgn}_l )=[\mathfrak{C}]$ for a left cell $\mathfrak{C} \subset W_n$ and furthermore, the element $w=(w_1 \, w_2 \ldots w_m \; n \; n-1 \ldots m+1) \in \mathfrak{C}$.  The left cell $\mathfrak{C}$ is represented by the tableau $T_r(w)$ whose shape is $p^I$, using the notation of (\ref{corollary:truncated}).  By Corollary \ref{corollary:truncated}, if we assume that $[\mathfrak{C}']$ carries the irreducible module associated to the shape of its representative tableau, then so does $[\mathfrak{C}]\cong J_{W_I}^W( [\mathfrak{C}'] \otimes  \textup{sgn}_l ).$

Next assume $s  \in \mathbb{N}$, so that each left cell is represented by a family of rank $r$ standard domino tableaux.  Again, we begin by investigating the effect on tensoring with sign.  Suppose $\mathfrak{C}$ is a left cell  represented by the set $\mathbb{T}$ and for each $T \in \mathbb{T}$, $w_T \in W_n$ is chosen so that $T_r(w_T)=T$.  By Lemma \ref{lemma:longword}, $\mathfrak{C}w_0$ is also a left cell and $[\mathfrak{C}w_0] \cong [\mathfrak{C}] \otimes \textup{sgn}.$   It is represented by the set of tableaux $T_r(w_T w_0) = T_r(w_T)^t$ (for $T \in \mathbb{T}$), which we write as $\mathbb{T}^t$.   By Corollary \ref{corollary:tensoringwithsign}, if we assume that $[\mathfrak{C}]$ carries the module $[\mathbb{T}]$  then $[\mathfrak{C}w_0]\cong [\mathfrak{C}] \otimes \textup{sgn}$ carries the module $[\mathbb{T}^t].$

For the case of truncated induction, again consider a maximal parabolic subgroup $W_I = W_m \times S_l$ of $W_n$.  Let $\mathfrak{C}'$ be a left cell of $W_m$ and  let $\mathfrak{C}'=\coprod_i \mathfrak{D}'_i$ be its decomposition into combinatorial left cells.  Suppose that $\mathfrak{D}'_i$ is represented by the set $\mathbb{T}'_i$ of domino tableaux and let $\mathbb{T}'= \coprod_i \mathbb{T}'_i$.  By definition of combinatorial left cells, every $\mathbb{T}'_i = \{ MT(T'_i, C) \; | \; C \subset OC^*(T'_i)\}$ for some rank $r$ standard domino tableau $T'_i$.  For each $i$, choose $\widetilde{w}^i=(w^i_1 \, w^i_2 \ldots w^i_m) \in W_m$ with $T'_i=T_r(\widetilde{w}^i)$ so that $\widetilde{w}^i \in \mathfrak{D}'_i$.    By Lemma \ref{lemma:paraboliccells}, $J_{W_I}^W( [\mathfrak{C}'] \otimes  \textup{sgn}_l )=[\mathfrak{C}]$ for a left cell $\mathfrak{C} \subset W_n$.  Furthermore, $w^i=(w^i_1 \, w^i_2 \ldots w^i_m \; n \; n-1 \ldots m+1) \in \mathfrak{C}$ and if $T_i = T_r(w^i)$, then $\mathfrak{C}$ is represented by the set of
tableaux $\mathbb{T}= \coprod_i \{MT(T_i, C) \; | \; C \subset OC^*(T_i)\}$. Lemma \ref{lemma:inductionshapes} describes the shapes of the tableaux in $\mathbb{T}$ in terms of the shapes of the tableaux in $\mathbb{T}'$.  This, together with Corollary \ref{corollary:truncated} shows that if we assume that $[\mathfrak{C}']$ carries the module $[\mathbb{T}']$, then $[\mathfrak{C}]\cong J_{W_I}^W( [\mathfrak{C}'] \otimes  \textup{sgn}_l )$ carries the module $[\mathbb{T}].$
\end{proof}

\begin{corollary} Consider a Weyl group of type $B_n$
with a weight function $L$ and parameter $s$ defined as above.  If statements {\bf P1-P15} hold, then
    \begin{enumerate}
        \item When $s \not\in \mathbb{N}$, the Kazhdan-Lusztig left cells of weight $s$ coincide
                with the irreducible combinatorial left cells of rank $\lfloor s \rfloor$.
        \item  When $s \in \mathbb{N}$, the Kazhdan-Lusztig left cells of weight $s$ coincide
                with the reducible combinatorial left cells of rank $s-1$.
    \end{enumerate}
If the set $\mathbb{T}$ of standard domino tableaux represents the left cell $\mathfrak{C}$ in $W_n$, then $[\mathfrak{C}] \cong [\mathbb{T}]$ as $W_n$-modules.  Furthermore, if $T \in \mathbb{T}$, then the number of elements of $\mathfrak{C}$ with right tableau $T$ is the dimension of the irreducible constituent $[p_T]$ of $[\mathfrak{C}]$.
\end{corollary}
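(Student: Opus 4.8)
The statement bundles three assertions: the coincidence of Kazhdan--Lusztig and combinatorial left cells (parts (1) and (2)), the isomorphism $[\mathfrak{C}]\cong[\mathbb{T}]$, and the final enumerative claim. Two of these are already available: part (1) is precisely Bonnaf\'e's theorem \cite{bonnafe:knuth}, and the isomorphism $[\mathfrak{C}]\cong[\mathbb{T}]$ is Theorem \ref{theorem:main}. I would therefore reduce the corollary to its two genuinely new points---upgrading Bonnaf\'e's containment to an equality when $s\in\mathbb{N}$, and the dimension count---and obtain the rest by citation.

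For the coincidence when $s\in\mathbb{N}$: by \cite{bonnafe:knuth} every left cell of weight $s$ is a disjoint union $\mathfrak{C}=\coprod_i\mathfrak{D}_i$ of reducible combinatorial cells of rank $r=s-1$, so I must show the union has a single term. The plan is to detect this from the module. By Geck \cite{geck:constructible}, under \textbf{P1--P15} the left-cell modules are exactly the constructible representations; and the combinatorics of Section \ref{section:constructible}---Corollary \ref{corollary:truncated} for truncated induction and Lemma \ref{lemma:inductionshapes} for its effect on cycles, together with transposition of tableaux for tensoring with sign (Corollary \ref{corollary:tensoringwithsign})---shows that both operations send the set of shapes of one reducible combinatorial cell to the set of shapes of one reducible combinatorial cell. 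Starting the inductive construction from the trivial module, every constructible representation is thus $[\mathbb{T}_{\mathfrak{D}}]$ for a \emph{single} combinatorial cell $\mathfrak{D}$. On the other hand Theorem \ref{theorem:main} together with Lemma \ref{lemma:disjointshapes} gives $[\mathfrak{C}]\cong\bigoplus_i[\mathbb{T}_{\mathfrak{D}_i}]$ with the shape-sets of the $\mathfrak{D}_i$ pairwise disjoint. Matching the two descriptions via Lemma \ref{lemma:shapes}, whose proof reads the number of combinatorial pieces off the intersection numbers of Corollary \ref{corollary:intersection}, forces that number to be $1$, i.e.\ $\mathfrak{C}=\mathfrak{D}_1$.

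I expect this final comparison to be the main obstacle. The difficulty is exactly that Bonnaf\'e's result leaves room for a nontrivial union, and neither the total dimension nor the multiset of irreducible constituents alone excludes it: a single combinatorial cell and a shape-disjoint union of two of them can have equal dimension and equal (multiplicity-free) constituents. What resolves the ambiguity is the finer cycle combinatorics encoded in Corollary \ref{corollary:intersection} and Lemma \ref{lemma:shapes}: the count $2^m$ of same-shape pairs, with $m$ the number of non-core \emph{extended} open cycles, separates a genuine combinatorial cell---in which the extended open cycles collapse to ordinary non-core open cycles---from a disjoint union of two such cells. Accordingly I would devote the core of the argument to showing that the module $[\mathbb{T}_{\mathfrak{D}}]$ of one combinatorial cell cannot be written as $\bigoplus_i[\mathbb{T}_{\mathfrak{D}_i}]$ with at least two shape-disjoint pieces, using the description of extended open cycles from \cite{garfinkle2} and \cite{pietraho:equivalence}.

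For the enumerative statement I would argue as follows, with the coincidence now in hand. Membership $w\in\mathfrak{C}$ depends only on the right tableau $T_r(w)$, so for a fixed $T\in\mathbb{T}$ the elements of $\mathfrak{C}$ with $T_r(w)=T$ are precisely $\{G_r^{-1}(S,T)\mid S\in SDT_r(p_T)\}$, because $G_r$ is a bijection onto same-shape pairs; there are $|SDT_r(p_T)|$ of them. It remains to identify $|SDT_r(p_T)|$ with $\dim[p_T]$. Passing from $p_T$ to its bipartition $(d,f)\in\mathcal{P}^2(n)$ through Theorem \ref{theorem:bijections} and invoking the classical correspondence between rank $r$ standard domino tableaux and pairs of standard Young tableaux (compatible with \cite{james:kerber}), one gets $|SDT_r(p_T)|=\binom{n}{|d|}f^{d}f^{f}$, where $f^{d}$ and $f^{f}$ count standard Young tableaux of shapes $d$ and $f$; this is exactly $\dim[(d,f)]=\dim[p_T]$. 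Hence the number of elements of $\mathfrak{C}$ with right tableau $T$ equals $\dim[p_T]$, the multiplicity-one constituent of $[\mathfrak{C}]$ indexed by the shape of $T$, as claimed. As a consistency check one may note $\sum_{T\in\mathbb{T}}\dim[p_T]=\dim[\mathbb{T}]=\dim[\mathfrak{C}]=|\mathfrak{C}|$, recovering the size of the cell.
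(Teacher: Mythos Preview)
Your proposal is correct and follows the paper's broad outline, but departs from it at the two substantive points. For part~(2), the paper cites \cite{pietraho:constructible} directly for the fact that every constructible module has the form $[\widetilde{\mathbb{T}}]$ with $\widetilde{\mathbb{T}}=\{MT(T,C)\mid C\subset OC^*(T)\}$ for a single tableau $T$, whereas you rebuild this from the inductive combinatorics of Section~\ref{section:constructible}. The paper then rules out a nontrivial union $\coprod_{i\in I}\mathfrak{D}_i$ by a short counting argument rather than through Lemma~\ref{lemma:shapes}: since the set of beginning and ending squares of non-core open cycles---hence their number---is the same for every shape appearing in $\widetilde{\mathbb{T}}$, each $\mathbb{T}_i$ already has $|\widetilde{\mathbb{T}}|$ elements, and Lemma~\ref{lemma:disjointshapes} forces $|I|=1$. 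Your route via Lemma~\ref{lemma:shapes} and extended open cycles is workable but less direct, since that lemma compares two left cells rather than a cell module with an abstract $[\mathbb{T}_{\mathfrak{D}}]$; you would in effect be led back to the constancy observation anyway. For the enumerative claim your approach is genuinely different and more elementary: the paper sums $\dim\textup{Hom}_W([\mathfrak{C}],[\mathfrak{C}'])=|\mathfrak{C}\cap\mathfrak{C}'^{-1}|$ over all left cells $\mathfrak{C}'$ via Lemma~\ref{lemma:12.15} and reads off the multiplicities, while you count directly using the bijection $G_r$ and the classical identity $|SDT_r(p)|=\dim[p]$. Your argument here is cleaner and bypasses the asymptotic ring $J$, at the cost of importing a combinatorial fact about domino tableaux that is not proved elsewhere in the paper.
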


\begin{proof}  The first part in the case $s \not\in \mathbb{N}$ is a result of C.~Bonnaf\'e \cite{bonnafe:knuth}.  To verify it in the case $s \in \mathbb{N}$, write a Kazhdan-Lusztig left cell $\mathfrak{C}$ in terms of combinatorial left cells as $\mathfrak{C}=\coprod_{i\in I} \mathfrak{D}_i$.  Since $[\mathfrak{C}]$ is constructible, the main result of \cite{pietraho:constructible} shows that
$[\mathfrak{C}] \cong [\widetilde{\mathbb{T}}]$ as $W_n$-modules where $\widetilde{\mathbb{T}} =\{MT(T, C) \; | \; C \subset OC^*(T)\}$ for some standard domino tableau $T$ of rank $r$.  Let each $\mathfrak{D}_i$ be represented by $\mathbb{T}_i = \{MT(T_i, C) \; | \; C \subset OC^*(T_i)\}$ and write $\mathbb{T} = \coprod_{i\in I} \mathbb{T}_i$.  By Theorem \ref{theorem:main}, $[\mathbb{T}] = [\widetilde{\mathbb{T}}].$  This implies that for every $i$, the set of beginning and ending squares of non-core open cycles in $T_i$ is contained in the corresponding set in $T$.  However, the size of this set is constant for every partition in the set of possible shapes of tableaux in $[\mathbb{T}]$.  By Lemma \ref{lemma:disjointshapes}, the only way this can occur is if $|I|=1$, that is, $\mathfrak{C}$ consists of a single combinatorial cell.

Finally, we verify the last statement of the corollary.  If $s \not \in \mathbb{N}$, consider a left cell $\mathfrak{C}$ represented by the tableau $T$.  Then $\dim [\mathfrak{C}] = \sum |\mathfrak{C} \cap \mathfrak{C'}^{-1}|$, the sum taken over all left cells $\mathfrak{C}'$ in $W_n$.  But $|\mathfrak{C} \cap \mathfrak{C'}^{-1}|=1$ iff the shape of the tableaux representing $\mathfrak{C}$ and $\mathfrak{C}'$ are the same; otherwise it is zero.  Since each left cell is represented by a unique tableau, the above sum equals the number of tableaux of the same shape as $T$.  This is the same as the number of elements of $\mathfrak{C}$ with right tableau $T$.  If $s \in \mathbb{N}$, consider left cells $\mathfrak{C}$ and $\mathfrak{C}'.$  For $w \in \mathfrak{C} \cap \mathfrak{C'}^{-1},$
$[shape \, T_r(w)]$ must be a component of both $[\mathfrak{C}]$ and $[\mathfrak{C'}]$.  Furthermore, each $w \in \mathfrak{C} \cap \mathfrak{C'}^{-1}$ must have the right tableau of a unique shape, establishing a bijection between $\mathfrak{C} \cap \mathfrak{C'}^{-1}$ and the set of irreducible modules common to $[\mathfrak{C}]$ and $[\mathfrak{C'}].$  If we let $\mathfrak{C}'$ vary over all left cells of $W_n$, the statement follows by Lemma \ref{lemma:12.15}.

\end{proof}

It should be remarked that the above statement classifying the module structure of left cells is not the strongest one could hope for.  In the so-called ``asymptotic" case when $s$ is sufficiently large,  M.~Geck has shown that whenever the tableaux representing $[\mathfrak{C}]$ and $[\mathfrak{C}']$ equal, then not only are the underlying $\mathcal{H}$-modules isomorphic, but the underlying structure constants are the same.  More precisely, there is a bijection $\mathfrak{C} \rightarrow \mathfrak{C}'$ sending $x \mapsto x'$ such that
$$h_{w,x,y}=h_{w,x',y'} \text{ for all $w \in W_n$ and $x,y \in \mathfrak{C}$}.$$
It would be interesting to know under what circumstances this stronger statement holds for other values of $s$.


\begin{thebibliography}{10}

\bibitem{alvis:left}
D.~Alvis.
\newblock The left cells of the {C}oxeter group of type ${H}_4$.
\newblock {\em J. Algebra} 107:160--168, 1987


\bibitem{bonnafe:knuth}
C.~Bonnaf{\'e}.
\newblock On {K}azhdan-{L}usztig cells in type {B}.
{\tt arXiv:math.RT/0806.0214}.

\bibitem{bonnafe:iancu}
C.~Bonnaf{\'e} and L.~Iancu.
\newblock Left cells in type $B\sb n$ with unequal parameters.
\newblock {\em Represent. Theory}, 7:587--609, 2003.

\bibitem{bgil}
C.~Bonnaf{\'e}, M.~Geck, L.~Iancu, and T.~Lam.
\newblock On domino insertion and {K}azhdan--{L}usztig cells in type
$B_n$, {\em Progress in Math. (Lusztig Birthday Volume).}
\newblock Birkhauser, to appear.

\bibitem{ducloux:positivity}
F.~DuCloux.
\newblock  Positivity results for the {H}ecke algebras of
noncrystallographic finite {C}oxeter group.
\newblock {\em J. Algebra} 303:731-741, 2006.


\bibitem{garfinkle1}
D.~Garfinkle.
\newblock On the classification of primitive ideals for complex classical {L}ie
  algebras (I).
\newblock {\em Compositio Math.}, 75(2):135--169, 1990.

\bibitem{garfinkle2}
D.~Garfinkle.
\newblock On the classification of primitive ideals for complex classical {L}ie
  algebras (II).
\newblock {\em Compositio Math.}, 81(3):307--336, 1992.

\bibitem{garfinkle3}
D.~Garfinkle.
\newblock On the classification of primitive ideals for complex classical {L}ie
  algebras (III).
\newblock {\em Compositio Math.}, 88(2):187--234, 1993.

\bibitem{geck:leftcells}
M.~Geck.
\newblock Constructible characters, leading coeffcients and left cells for finite {C}oxeter groups
with unequal parameters.
\newblock {\em Represent. Theory}, 6:1-30, 2002.

\bibitem{geck:constructible}
M.~Geck.
\newblock Left Cells and Constructible Representations.
\newblock {\em Represent. Theory}, 9:385-416, 2005.

\bibitem{geck:relative}
M.~Geck.
\newblock Relative {K}azhdan-{L}usztig cells.
\newblock {\em Represent. Theory.}, 10:481--524, 2006.

\bibitem{geck:murphy}
M.~Geck.
\newblock Kazhdan-{L}usztig cells and the {M}urphy basis.
\newblock {\em Proc. London Math. Soc. (3)}, 93(3):635--665, 2006.

\bibitem{geck:iancu}
M.~Geck and L.~Iancu.
\newblock Lusztig's {$a$}-function in type {$B\sb n$} in the asymptotic
              case.
\newblock {\em Nagoya Math. J.}, 182:199--240, 2006.

\bibitem{geck:remarks}
M.~Geck.
\newblock On Iwahori--Hecke algebras with unequal parameters and Lusztig's isomorphism theorem.
\newblock {\em Pure and Applied Mathematics Quarterly}, to appear.

\bibitem{james:kerber}
G.~James and A.~Kerber.
\newblock {\em The {R}epresentation {T}heory of the {S}ymmetric {G}roup},
  volume~16 of {\em Encyclopedia of Mathematics and its Applications}.
\newblock Addison-Wesley Publishing Co., Reading, Mass., 1981.

\bibitem{kazhdan:lusztig:schubert}
D.~A. Kazhdan and G.~Lusztig.
\newblock {S}chubert varieties and {P}oincar\'e
duality.
\newblock {\em Proc. Sympos. Pure Math.} 34:185--203, 1980.

\bibitem{lusztig:classofirreducible}
G.~Lusztig.
\newblock A class of irreducible representations of a {W}eyl group II.
\newblock {\em Indag. Math.} 44:219--226,1982.

\bibitem{lusztig:characters}
G.~Lusztig,
\newblock {\em Characters of reductive groups over a finite field}, volume ~207 of {\em
Annals Math.\ Studies,}  Princeton University Press, 1984.

\bibitem{lusztig:leftcells}
G.~Lusztig.
\newblock {\em Left Cells in Weyl groups}, Lie Group
Representations, Lecture Notes in Mathematics 1024, 1983, pp.
99-111.

\bibitem{lusztig:unequal}
G.~Lusztig.
\newblock {\em {H}ecke algebras with unequal parameters}, volume~18 of {\em CRM
  Monograph Series}.
\newblock American Mathematical Society.

\bibitem{mcgovern:leftcells}
W.~M.~McGovern. \newblock Left cells and domino tableaux in
classical Weyl groups.
\newblock{\em Compositio Math.}, 101:77--98,
1996.

\bibitem{pietraho:rscore}
T.~Pietraho.
\newblock A relation for domino {R}obinson-{S}chensted algorithms.
\newblock {\em Annals of Combinatorics}, to appear.

\bibitem{pietraho:equivalence}
T.~Pietraho.
\newblock Equivalence classes in the {W}eyl groups of type $B_n$.
\newblock {\em Journal of Algebraic Combinatorics}, 27(2):247--262, 2008.

\bibitem{pietraho:constructible}
T.~Pietraho.
\newblock Cells and constructible representations in type $B_n$.
\newblock {\em New York Journal of Mathematics}, 14:411-430, 2008.


\bibitem{springer:intersection}
T. A.~Springer.
\newblock Quelques applications de la cohomologie d'intersection.
\newblock {\em S\'{e}minaire Bourbaki}, 589:249-273, 1982

\bibitem{vanleeuwen:rank}
M.~A.~A. van Leeuwen.
\newblock The {R}obinson-{S}chensted and {S}chutzenberger algorithms, an elementary
  approach.
\newblock {\em Electronic Journal of Combinatorics}, 3(2), 1996.

\end{thebibliography}
\end{document}